\newcommand{\abs}[1]{\left|#1\right|}
\newcommand{\bdry}[1]{\partial #1}
\newcommand{\dint}{\ds{\int}}
\newcommand{\ds}[1]{\displaystyle #1}
\newcommand{\dualp}[3][]{\left(#2,#3\right)_{#1}}
\newcommand{\eps}{\varepsilon}
\newcommand{\norm}[2][]{\left\|#2\right\|_{#1}}
\renewcommand{\o}{\text{o}}
\newcommand{\PS}[1]{$(\text{PS})_{#1}$}
\newcommand{\pnorm}[2][]{\if #1'' \left|#2\right|_p \else \left|#2\right|_{#1} \fi}
\newcommand{\QED}{\mbox{\qedhere}}
\newcommand{\R}{\mathbb R}
\newcommand{\seq}[1]{\left(#1\right)}
\newcommand{\set}[1]{\left\{#1\right\}}
\DeclareMathOperator{\divg}{div}
\newenvironment{enumroman}{\begin{enumerate}

}{\end{enumerate}}
\newtheorem{corollary}{Corollary}[section]
\newtheorem{lemma}[corollary]{Lemma}
\newtheorem{proposition}[corollary]{Proposition}
\newtheorem{theorem}[corollary]{Theorem}
\theoremstyle{remark}
\newtheorem{remark}[corollary]{Remark}
\numberwithin{equation}{section}
\title{\bf On the Brezis-Nirenberg problem for the $(p,q)$-Laplacian\thanks{{\em MSC2010:} Primary 35J92, Secondary 35B33
\newline \indent\; {\em Key Words and Phrases:} $(p,q)$-Laplacian, critical Sobolev exponent, existence, nonexistence}}
\author{\bf Ky Ho\\
Department of Mathematical Sciences\\
Ulsan National Institute of Science and Technology\\
Ulsan 44919, Republic of Korea\\
\em kyho@unist.ac.kr\\
[\bigskipamount]
\bf Kanishka Perera\\
Department of Mathematical Sciences\\
Florida Institute of Technology\\
Melbourne, FL 32901, USA\\
\em kperera@fit.edu\\
[\bigskipamount]
\bf Inbo Sim\\
Department of Mathematics\\
University of Ulsan\\
Ulsan 44610, Republic of Korea\\
\em ibsim@ulsan.ac.kr}
\date{}
\begin{document}

\maketitle

\begin{abstract}
We prove some existence and nonexistence results for a class of critical $(p,q)$-Laplacian problems in a bounded domain. Our results extend and complement those in the literature for model cases.
\end{abstract}

\newpage

\section{Introduction}

Consider the critical $(p,q)$-Laplacian problem
\begin{equation} \label{001}
\left\{\begin{aligned}
- \Delta_p\, u - \Delta_q\, u & = b\, |u|^{s-2}\, u + |u|^{p^\ast - 2}\, u && \text{in } \Omega\\[10pt]
u & = 0 && \text{on } \bdry{\Omega},
\end{aligned}\right.
\end{equation}
where $\Omega$ is a bounded domain in $\R^N,\, N \ge 2$, $1 < q < p < N$, $p^\ast = Np/(N - p)$ is the critical Sobolev exponent, $1 < s < p^\ast$, and $b > 0$. It was shown in Li and Zhang \cite{MR2509998} that this problem has infinitely many solutions when $1 < s < q$ and $b > 0$ is sufficiently small. On the other hand, it was shown in Yin and Yang \cite{MR2890966} that it has a nontrivial solution when $p < s < p^\ast$ and $b > 0$ is sufficiently large. Sufficient conditions for the existence of a nontrivial solution when $s = p$ and $b > 0$ is either small or large were given in Candito et al.\! \cite{MR3415031}. A rescaling of a result in Ho and Sim \cite{MR4131823} shows that the related problem
\begin{equation} \label{002}
\left\{\begin{aligned}
- \Delta_p\, u - \nu\, \Delta_q\, u & = b\, |u|^{s-2}\, u + |u|^{p^\ast - 2}\, u && \text{in } \Omega\\[10pt]
u & = 0 && \text{on } \bdry{\Omega}
\end{aligned}\right.
\end{equation}
has a nontrivial solution when $q < s < p$ and $\nu, b > 0$ are sufficiently small. The borderline case $s = q$ does not seem to have been studied in the literature.

In the present paper we prove some existence results for a more general class of critical $(p,q)$-Laplacian problems that, in particular, give a nontrivial solution of problem \eqref{001} for all $b > 0$ and a nontrivial solution of problem \eqref{002} for sufficiently small $\nu > 0$ and all $b > 0$. More specifically, our main results for the model problems \eqref{001} and \eqref{002} are the following:

\begin{theorem}
Problem \eqref{001} has a nontrivial weak solution for all $b > 0$ in each of the following cases:
\begin{enumroman}
\item $1 < q < N(p - 1)/(N - 1)$ and $N^2 (p - 1)/(N - 1)(N - p) < s < p^\ast$,
\item $N(p - 1)/(N - 1) \le q < p$ and $Nq/(N - p) < s < p^\ast$.
\end{enumroman}
In particular, problem \eqref{001} has a nontrivial weak solution for all $b > 0$ when $N^2 - p\, (p + 1) N + p^2 \ge 0$, $q \le (N - p)\, p/N$, and $p < s < p^\ast$, and when $N^2 - p\, (p + 1) N + p^2 > 0$, $q < (N - p)\, p/N$, and $s = p$.
\end{theorem}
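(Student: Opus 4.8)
\emph{Strategy.} The plan is to solve \eqref{001} variationally on $X:=W_0^{1,p}(\Omega)$ by a mountain pass argument for the $C^1$ functional
\[
\Phi(u)=\frac1p\int_\Omega|\nabla u|^p\,dx+\frac1q\int_\Omega|\nabla u|^q\,dx-\frac bs\int_\Omega|u|^s\,dx-\frac1{p^\ast}\int_\Omega|u|^{p^\ast}\,dx,
\]
whose critical points are exactly the weak solutions of \eqref{001} (recall $X\hookrightarrow W_0^{1,q}(\Omega)$ since $q<p$, $X\hookrightarrow L^{p^\ast}(\Omega)$, and $X\hookrightarrow L^s(\Omega)$ compactly since $s<p^\ast$). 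The geometry of $\Phi$ will furnish a Cerami sequence at a minimax level $c$ satisfying $0<c<c^\ast:=\frac1N\,S^{N/p}$, where $S$ is the best constant in $S\,|u|_{p^\ast}^p\le|\nabla u|_p^p$ on $\mathcal D^{1,p}(\R^N)$, attained by the Aubin--Talenti functions $U_\eps$; the strict inequality $c<c^\ast$ is precisely what allows a concentration-compactness analysis of that sequence to produce a nontrivial critical point.

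\emph{Geometry and bounded Cerami sequences.} The first point is that $0$ is a strict local minimum of $\Phi$ for \emph{every} $b>0$, including when $s\le p$. The key estimate is that for each $\delta>0$ there is $C_\delta>0$ with
\[
\int_\Omega|u|^s\,dx\le\delta\int_\Omega|\nabla u|^q\,dx+C_\delta\int_\Omega|u|^{p^\ast}\,dx\qquad(u\in X),
\]
proved by interpolating $L^s$ between $L^q$ and $L^{p^\ast}$ (licit since $q<s<p^\ast$), applying the Sobolev inequalities for $W_0^{1,q}$ and $W_0^{1,p}$, and using Young's inequality --- the interpolation parameter being exactly the one that makes the resulting Hölder exponents conjugate. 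Choosing $\delta$ small (depending on $b$) gives $\Phi(u)\ge\frac1p|\nabla u|_p^p+\frac1{2q}|\nabla u|_q^q-C_b\,|u|_{p^\ast}^{p^\ast}\ge\frac1p|\nabla u|_p^p-C_b'\,|\nabla u|_p^{p^\ast}$, so $\Phi\ge\alpha>0$ on a small sphere, while $\Phi(t\varphi)\to-\infty$ as $t\to+\infty$ for fixed $\varphi\ne0$ because $p^\ast>\max\{p,s\}$. Every Cerami sequence $(u_n)$ is bounded: the combination $\Phi(u_n)-\frac1p\langle\Phi'(u_n),u_n\rangle$ produces $|\nabla u_n|_q^q$ and $|u_n|_{p^\ast}^{p^\ast}$ with positive coefficients, up to a term controlled by $|u_n|_s^s$, which is dominated directly when $s\ge p$ and absorbed via Hölder on $\Omega$ together with its sublinear growth in $|u_n|_{p^\ast}^{p^\ast}$ when $s<p$; then $|\nabla u_n|_p^p$ is bounded from $\langle\Phi'(u_n),u_n\rangle=\o(1)$.

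\emph{The level estimate --- the main obstacle.} It remains to exhibit $\varphi\in X\setminus\{0\}$ with $\sup_{t\ge0}\Phi(t\varphi)<c^\ast$, which forces $c<c^\ast$. I would take $\varphi=u_{\eps,r}:=\eta_r\,U_\eps$, the bubble $U_\eps$ centered at a point of $\Omega$ cut off by $\eta_r\in C_c^\infty$ supported in a ball of radius $r$. The classical expansions give $|\nabla u_{\eps,r}|_p^p=S^{N/p}+O\!\big((\eps/r)^{(N-p)/(p-1)}\big)$ and $|u_{\eps,r}|_{p^\ast}^{p^\ast}=S^{N/p}+O\!\big((\eps/r)^{N/(p-1)}\big)$, so that $\max_{t\ge0}\big(\frac{t^p}p|\nabla u_{\eps,r}|_p^p-\frac{t^{p^\ast}}{p^\ast}|u_{\eps,r}|_{p^\ast}^{p^\ast}\big)\le c^\ast+O\!\big((\eps/r)^{(N-p)/(p-1)}\big)$, attained near $t=1$; restricting to a fixed compact range of $t$ (outside of which $\Phi(t\,u_{\eps,r})$ is strongly negative), the $q$-term then contributes $O(|\nabla u_{\eps,r}|_q^q)$ and the $s$-term a quantity $\asymp-b\,|u_{\eps,r}|_s^s$. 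A direct computation with the explicit profile gives $|u_{\eps,r}|_s^s\asymp\eps^{\,N-s(N-p)/p}$ --- this uses $s>N(p-1)/(N-p)$, which both (i) and (ii) imply --- together with $|\nabla u_{\eps,r}|_q^q\asymp\eps^{\,N(p-q)/p}$ already for $r\asymp1$ when $q\ge N(p-1)/(N-1)$, whereas when $q<N(p-1)/(N-1)$ the integral $\int_{\R^N}|\nabla U|^q$ diverges and one gets instead $|\nabla u_{\eps,r}|_q^q\asymp\eps^{\,q(N-p)/(p(p-1))}\,r^{\,N-q(N-1)/(p-1)}$. In case (ii) one keeps $r$ a fixed radius, and the requirement that the negative $s$-contribution beat the error terms as $\eps\to0$ reduces by elementary algebra to $s>Nq/(N-p)$. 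In case (i) one takes $r=\eps^{\,(N-p)/(p(N-1))}$, the choice equalizing the two relevant $\eps$-exponents; with it the $q$-dependence cancels, both the $q$-energy and the $p$-term error become $\asymp\eps^{\,N(N-p)/(p(N-1))}$, and the requirement becomes $N-s(N-p)/p<N(N-p)/(p(N-1))$, i.e.\ $s>N^2(p-1)/((N-1)(N-p))$. This comparison of $\eps$-exponents is the step I expect to be the main obstacle, and it is the only place where the sharp hypotheses on $N,p,q,s$ are used.

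\emph{Conclusion.} Let $(u_n)$ be a Cerami sequence at level $c\in(0,c^\ast)$; it is bounded, so along a subsequence $u_n\rightharpoonup u$ in $X$, and a standard argument (a.e.\ convergence of the gradients for such sequences) shows $u$ is a weak solution of \eqref{001}. Suppose $u=0$. Then $u_n\to0$ in $L^s$, and the Brezis--Lieb lemma applied to the $p$-, $q$- and $p^\ast$-terms, together with $\langle\Phi'(u_n),u_n\rangle=\o(1)$, yields $|\nabla u_n|_p^p\to A$, $|\nabla u_n|_q^q\to C$, $|u_n|_{p^\ast}^{p^\ast}\to A+C$ with $A\ge S\,(A+C)^{p/p^\ast}\ge S\,A^{p/p^\ast}$, hence $A=0$ or $A\ge S^{N/p}$, while $\Phi(u_n)\to\frac1N A+\big(\frac1q-\frac1{p^\ast}\big)C$. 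The alternative $A=0$ forces $C=0$ (Hölder on $\Omega$) and hence $c=0$; the alternative $A\ge S^{N/p}$ forces $c\ge\frac1N S^{N/p}=c^\ast$; both contradict $c\in(0,c^\ast)$. Therefore $u\ne0$ is a nontrivial weak solution of \eqref{001}. The ``in particular'' assertions are then arithmetic: $N^2-p(p+1)N+p^2>0$ is equivalent to $N^2(p-1)/((N-1)(N-p))<p$, while $q\le(N-p)p/N$ (resp.\ $q<(N-p)p/N$) is equivalent to $Nq/(N-p)\le p$ (resp.\ $<p$), so that $s=p$ or $p<s<p^\ast$ falls under case (i) when $q<N(p-1)/(N-1)$ and under case (ii) when $N(p-1)/(N-1)\le q<p$.
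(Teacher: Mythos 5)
Your proof is correct and follows essentially the same route as the paper: a mountain pass at a level below $c^\ast=\tfrac1N S^{N/p}$, established by testing with truncated Aubin--Talenti bubbles whose cutoff radius is coupled to $\eps$, and a compactness argument showing PS/Cerami sequences below $c^\ast$ yield a nontrivial weak limit. The only genuine (if small) difference is that in case $(i)$ you pick a specific cutoff scale $r=\eps^{(N-p)/(p(N-1))}$ that equalizes the exponents of the $q$-energy and the $p$-error term $(\eps/r)^{(N-p)/(p-1)}$, whereas the paper works with an arbitrary $\delta=\eps^\kappa$ and shows the admissible interval $(\underline\kappa,\overline\kappa)\cap[0,1)$ is nonempty under the hypotheses; your explicit choice lands inside that interval (indeed it makes both error exponents equal to $N(N-p)/(p(N-1))$, and the required inequality $N-(N-p)s/p<N(N-p)/(p(N-1))$ is exactly $s>N^2(p-1)/((N-1)(N-p))$), so the two presentations are equivalent, your choice being slightly more concrete and the paper's exposing the sharp threshold more symmetrically. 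Minor presentational differences --- Cerami sequences and a Brezis--Lieb bookkeeping versus the paper's bounded PS sequences and the direct argument $\|u_j\|^p\le\|u_j\|^{p^\ast}/S^{p^\ast/p}+\o(1)$ --- do not change the content, and your verification of the ``in particular'' arithmetic matches the paper's.
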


\begin{theorem}
There exists $\nu_0 > 0$ such that problem \eqref{002} has a nontrivial weak solution for all $\nu \in (0,\nu_0)$ and $b > 0$ in each of the following cases:
\begin{enumroman}
\item $N \ge p^2$ and $q < s < p^\ast$,
\item $N < p^2$ and either $q < s < p$ or $(Np - 2N + p)\, p/(N - p)(p - 1) < s < p^\ast$.
\end{enumroman}
In particular, problem \eqref{002} has a nontrivial weak solution for all $\nu \in (0,\nu_0)$ and $b > 0$ when $q < s < p$, and when $N \ge p^2$ and $s = p$.
\end{theorem}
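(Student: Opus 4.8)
\quad
The plan is variational. Since $1<q<p<N$, the embedding $W^{1,p}_0(\Omega)\hookrightarrow W^{1,q}_0(\Omega)$ is continuous, so I would work in $X:=W^{1,p}_0(\Omega)$ with $\|u\|:=\|\nabla u\|_p$ and seek critical points of
\[
J_\nu(u):=\frac1p\,\|\nabla u\|_p^p+\frac\nu q\,\|\nabla u\|_q^q-\frac bs\,|u|_s^s-\frac1{p^\ast}\,|u|_{p^\ast}^{p^\ast},
\]
where $|\cdot|_r$ is the $L^r(\Omega)$-norm. The first step is a compactness statement: since $-\Delta_p-\nu\Delta_q$ is of type $(S_+)$, the term $b\,|u|^{s-2}u$ is a compact perturbation (as $1<s<p^\ast$), and $\nu\,\|\nabla u\|_q^q$ is of lower order at the concentration scale, the concentration--compactness analysis familiar from the $p$-Laplacian Brezis--Nirenberg problem should yield that $J_\nu$ satisfies \PS{c} for every $c<c^\ast:=\tfrac1N\,S^{N/p}$, where $S$ is the best constant in $S\,|u|_{p^\ast}^p\le\|\nabla u\|_p^p$; and, when $s\le p$, the sharper version that \PS{c} holds for $c<c^\ast+d_\nu$, where $d_\nu$ is the infimum of $J_\nu$ over the nontrivial solutions of \eqref{002}.

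Next I would fix the minimax geometry. For $p<s<p^\ast$ the functional $J_\nu$ has mountain--pass geometry based at $0$ for \emph{all} $\nu,b>0$, since $\tfrac1p\|u\|^p$ dominates $\tfrac bs|u|_s^s$ near $0$ (as $s>p$) and $J_\nu(t\phi)\to-\infty$ as $t\to\infty$. For $q<s\le p$ this fails at $0$, and here I would exploit the term $\nu\,\|\nabla u\|_q^q$ with $\nu$ small: because $q<\min\set{s,p}$, it dominates the negative contributions $-\tfrac bs|u|_s^s$ (and, when $s=p$, $-\tfrac bp|u|_p^p$) on a ball whose radius shrinks with $\nu$, making $0$ a strict local minimum of $J_\nu$ — and, when $s=p$ and $b$ exceeds the first eigenvalue of $-\Delta_p$, combining this with a linking over the corresponding eigenspace. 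Let $c_\nu>0$ be the resulting minimax level; by the first step it suffices to prove $c_\nu<c^\ast$, resp.\ $c_\nu<c^\ast+d_\nu$ when $s\le p$.

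The heart of the matter is this energy estimate, which I would carry out with the Aubin--Talenti extremals $U_\eps$ for $S$, truncated to $\Omega$. From $\|\nabla U_\eps\|_p^p=S^{N/p}+O(\eps^{(N-p)/(p-1)})$ and $|U_\eps|_{p^\ast}^{p^\ast}=S^{N/p}+O(\eps^{N/(p-1)})$, and since the maximiser of $t\mapsto J_\nu(t\,U_\eps)$ stays bounded away from $0$ and $\infty$, I expect, along the associated path,
\[
c_\nu\ \le\ \frac1N\,S^{N/p}\;+\;C_1\,\eps^{(N-p)/(p-1)}\;+\;C_2\,\nu\,\|\nabla U_\eps\|_q^q\;-\;C_3\,b\,|U_\eps|_s^s,
\]
$C_1,C_2,C_3>0$. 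Now $|U_\eps|_s^s$ is of order $\eps^{\,N-s(N-p)/p}$ if $s>N(p-1)/(N-p)$, of order $\eps^{\,s(N-p)/(p(p-1))}$ if $s<N(p-1)/(N-p)$, and of order $\eps^{N/p}|\log\eps|$ at the threshold $s=N(p-1)/(N-p)$, while $\|\nabla U_\eps\|_q^q\to0$. Comparing the $\eps$-exponent of $|U_\eps|_s^s$ with $(N-p)/(p-1)$ shows that the gain $-C_3\,b\,|U_\eps|_s^s$ outweighs the bubble error $C_1\,\eps^{(N-p)/(p-1)}$ for small $\eps$ exactly in cases (i) and (ii): for $q<s<p$ it always does (in each regime the relevant exponent is $<(N-p)/(p-1)$); for $s=p$ it requires $N\ge p^2$ — the $p$-Laplacian counterpart of $N\ge4$ — with the borderline $N=p^2$ saved by the logarithm; and for $s\ge p$ with $N<p^2$ it forces $s>(Np-2N+p)\,p/(N-p)(p-1)$, which is precisely the lower bound in (ii). Having fixed such an $\eps$, I would then pick $\nu_0>0$ small enough that $C_2\,\nu\,\|\nabla U_\eps\|_q^q$ is absorbed for $\nu<\nu_0$, so that $c_\nu<c^\ast$; since $\|\nabla U_\eps\|_q^q\to0$ one can, with a little extra care, take $\nu_0$ independent of $b$. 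When $s\le p$ I would instead test with $u_0+t\,U_\eps$, where $u_0$ realises $d_\nu$ and $U_\eps$ concentrates away from a fixed interior point, to obtain $c_\nu<c^\ast+d_\nu$. The ``in particular'' assertions follow at once, since $q<s<p$ lies in (i) when $N\ge p^2$ and in (ii) when $N<p^2$, and $s=p$ with $N\ge p^2$ lies in (i).

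The hardest part will be this last paragraph. I anticipate two real difficulties: (a) making the minimax geometry available for $s\le p$ uniformly in $b$ — that $0$ is a strict local minimum of $J_\nu$ for $\nu$ small, or that the linking substitute works when $s=p$ and $b$ is large (this is the only place the $q$-term is used in the geometry, and where smallness of $\nu$ is essential); and (b) upgrading to $c_\nu<c^\ast+d_\nu$, which calls for estimating the cross terms between the fixed $u_0$ and the concentrating $U_\eps$ in the quasilinear energy. The remaining work — the three decay regimes of $|U_\eps|_s^s$, the logarithmic case $N=p^2$, and verifying that $\nu_0$ may be chosen independently of $b$ — is routine, but it is exactly there that the arithmetic thresholds in (i)--(ii) are pinned down.
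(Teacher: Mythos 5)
Your overall strategy is the same as the paper's: Aubin--Talenti bubbles, a local Palais--Smale threshold at $c^\ast=\tfrac1N S^{N/p}$, and a comparison between the $\eps$-rate of $|U_\eps|_s^s$ and the rate $\eps^{(N-p)/(p-1)}$ of the Sobolev-quotient error. Your exponent bookkeeping in the three regimes (the value $N(p-1)/(N-p)$ as the crossover, the threshold $s>(Np-2N+p)p/((N-p)(p-1))$ for $s$ large, and $N\ge p^2$ pinning down the $s=p$ case with a logarithm when $N=p^2$) matches the paper exactly (its Lemma 303). The difference in execution is minor but worth noting: the paper first proves $c_0<c^\ast$ for the pure $p$-functional $E_0$ (i.e.\ with $\nu=0$, where there is no competing $q$-gradient term to estimate), produces a path $\gamma_0$ realising this, and then observes that $E_\nu=E_0+\tfrac\nu q\int|\nabla u|^q$ keeps $\max_{\gamma_0}E_\nu<c^\ast$ for $\nu$ below a threshold determined by $\sup_{\gamma_0}\int|\nabla u|^q$; you instead carry $\nu$ through the bubble estimate and absorb $C_2\,\nu\,\|\nabla U_\eps\|_q^q$ at the end. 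Both work, but the paper's perturbation-from-$\nu=0$ route is cleaner because the optimal multiplier $t_\eps$ and the test path do not depend on $\nu$.

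Where you have gone genuinely astray is in the extra machinery you invoke for $q<s\le p$. You propose a sharper compactness threshold $c^\ast+d_\nu$ (with $d_\nu$ the ground-state energy), a linking geometry over a $p$-Laplacian eigenspace when $s=p$ and $b$ is large, and a two-bubble test function $u_0+tU_\eps$. None of this is needed, and your premise that the simple argument ``fails at $0$'' for $s\le p$ is wrong: for the $(p,q)$-functional with $\nu>0$ the origin is a strict local minimiser for \emph{every} $b>0$ as soon as $s>q$, because $F(x,t)=\tfrac bs|t|^s=o(|t|^q)$ as $t\to0$ so that $\tfrac\nu q\|\nabla u\|_q^q$ dominates the $\tfrac bs|u|_s^s$ term near $0$ (this is exactly the role of the paper's hypothesis \eqref{107}, which the model nonlinearity satisfies automatically when $s>q$). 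Consequently the ordinary mountain-pass geometry and the plain Proposition \ref{Proposition 201} compactness at level $<c^\ast$ suffice uniformly over $q<s<p^\ast$; no linking and no translated bubble are required. Finally, you flag that taking $\nu_0$ independent of $b$ requires ``a little extra care'' --- fair, and the paper's proof in fact produces a $\nu_0$ tied to a $b$-dependent path, so this uniformity in $b$ deserves the scrutiny you give it.
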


In the borderline case $s = q$ we show that problem \eqref{001} has no nontrivial weak solution for all sufficiently small $b > 0$ when $\Omega$ is a star-shaped domain with $C^1$-boundary (see Theorem \ref{Theorem 106}). The proof of this nonexistence result will be based on a new Poho\v{z}aev type identity for the $(p,q)$-Laplacian (see Theorem \ref{Theorem 108}), which is of independent interest.

We refer the reader to Marano and Mosconi \cite{MR3732167} for a survey of recent existence and multiplicity results for subcritical and critical $(p,q)$-Laplacian problems in bounded domains.

\section{Statement of results}

We consider the critical $(p,q)$-Laplacian problem
\begin{equation} \label{101}
\left\{\begin{aligned}
- \Delta_p\, u - \Delta_q\, u & = f(x,u) + |u|^{p^\ast - 2}\, u && \text{in } \Omega\\[10pt]
u & = 0 && \text{on } \bdry{\Omega},
\end{aligned}\right.
\end{equation}
where $\Omega$ is a bounded domain in $\R^N,\, N \ge 2$, $1 < q < p < N$, $p^\ast = Np/(N - p)$ is the critical Sobolev exponent, and $f$ is a Carath\'eodory function on $\Omega \times \R$ satisfying
\begin{equation} \label{102}
f(x,0) = 0 \quad \text{for a.a.\! } x \in \Omega
\end{equation}
and the subcritical growth condition
\begin{equation} \label{103}
|f(x,t)| \le a_1\, |t|^{r-1} + a_2 \quad \text{for a.a.\! } x \in \Omega \text{ and all } t \in \R
\end{equation}
for some constants $a_1, a_2 > 0$ and $r \in (p,p^\ast)$. A weak solution of this problem is a function $u \in W^{1,\,p}_0(\Omega)$ satisfying
\[
\int_\Omega \left(|\nabla u|^{p-2}\, \nabla u \cdot \nabla v + |\nabla u|^{q-2}\, \nabla u \cdot \nabla v - f(x,u)\, v - |u|^{p^\ast - 2}\, uv\right) dx = 0 \quad \forall v \in W^{1,\,p}_0(\Omega),
\]
where $W^{1,\,p}_0(\Omega)$ is the usual Sobolev space with the norm $\norm{u} = \left(\int_\Omega |\nabla u|^p\, dx\right)^{1/p}$. Weak solutions coincide with critical points of the $C^1$-functional
\[
E(u) = \int_\Omega \left(\frac{1}{p}\, |\nabla u|^p + \frac{1}{q}\, |\nabla u|^q - F(x,u) - \frac{1}{p^\ast}\, |u|^{p^\ast}\right) dx, \quad u \in W^{1,\,p}_0(\Omega),
\]
where $F(x,t) = \int_0^t f(x,\tau)\, d\tau$ is the primitive of $f$. Recall that a sequence $\seq{u_j} \subset W^{1,\,p}_0(\Omega)$ such that $E(u_j) \to c$ and $E'(u_j) \to 0$ is called a \PS{c} sequence. Let
\begin{equation} \label{104}
c^\ast = \frac{1}{N}\, S^{N/p},
\end{equation}
where
\begin{equation} \label{105}
S = \inf_{u \in W^{1,\,p}_0(\Omega) \setminus \set{0}}\, \frac{\dint_\Omega |\nabla u|^p\, dx}{\left(\dint_\Omega |u|^{p^\ast} dx\right)^{p/p^\ast}}
\end{equation}
is the best Sobolev constant. If $0 < c < c^\ast$, then every \PS{c} sequence has a subsequence that converges weakly to a nontrivial critical point of $E$ (see Proposition \ref{Proposition 201}).

Let
\begin{equation} \label{106}
\lambda_1 = \inf_{u \in W^{1,\,p}_0(\Omega) \setminus \set{0}}\, \frac{\dint_\Omega |\nabla u|^p\, dx}{\dint_\Omega |u|^p\, dx}, \qquad \mu_1 = \inf_{u \in W^{1,\,q}_0(\Omega) \setminus \set{0}}\, \frac{\dint_\Omega |\nabla u|^q\, dx}{\dint_\Omega |u|^q\, dx}
\end{equation}
be the first Dirichlet eigenvalues of the $p$-Laplacian and the $q$-Laplacian, respectively. Assume that
\begin{equation} \label{107}
F(x,t) \le \frac{\lambda}{p}\, |t|^p + \frac{\mu_1}{q}\, |t|^q \quad \text{for a.a.\! } x \in \Omega \text{ and } |t| < \delta
\end{equation}
for some $\lambda \in (0,\lambda_1)$ and $\delta > 0$. It follows from this and \eqref{103} that
\[
F(x,t) \le \frac{\lambda}{p}\, |t|^p + \frac{\mu_1}{q}\, |t|^q + a_3\, |t|^r \quad \text{for a.a.\! } x \in \Omega \text{ and all } t \in \R
\]
for some constant $a_3 > 0$, so
\[
E(u) \ge \int_\Omega \left[\frac{1}{p} \left(1 - \frac{\lambda}{\lambda_1}\right) |\nabla u|^p - a_3\, |u|^r - \frac{1}{p^\ast}\, |u|^{p^\ast}\right] dx.
\]
Since $p < r < p^\ast$, it follows that the origin is a strict local minimizer of $E$. On the other hand, it also follows from \eqref{103} that $E(tu) \to - \infty$ as $t \to + \infty$ for any $u \in W^{1,\,p}_0(\Omega) \setminus \set{0}$. So $E$ has the mountain pass geometry. Let
\[
\Gamma = \set{\gamma \in C([0,1],W^{1,\,p}_0(\Omega)) : \gamma(0) = 0,\, E(\gamma(1)) < 0}
\]
be the class of paths in $W^{1,\,p}_0(\Omega)$ joining the origin to the set $\set{u \in W^{1,\,p}_0(\Omega) : E(u) < 0}$, and set
\begin{equation} \label{108}
c := \inf_{\gamma \in \Gamma}\, \max_{u \in \gamma([0,1])}\, E(u).
\end{equation}
Since the origin is a strict local minimizer of $E$, $c > 0$. A standard deformation argument then shows that $E$ has a \PS{c} sequence. The purpose of this paper is to give lower bounds on $F$ to guarantee that $c < c^\ast$ holds and hence this \PS{c} sequence has a subsequence that converges weakly to a nontrivial solution of problem \eqref{101}.

We assume that there is a ball $B_\rho(x_0) \subset \Omega$ such that
\begin{equation} \label{109}
F(x,t) \ge bt^s \quad \text{for a.a.\! } x \in B_\rho(x_0) \text{ and all } t \ge 0
\end{equation}
for some constants $b > 0$ and $s \in (q,p^\ast)$.

\begin{theorem} \label{Theorem 101}
Let $1 < q < p < N$ and assume \eqref{102}, \eqref{103}, \eqref{107}, and \eqref{109}. Then problem \eqref{101} has a nontrivial weak solution in each of the following cases:
\begin{enumroman}
\item $q < N(p - 1)/(N - 1)$ and $s > N^2 (p - 1)/(N - 1)(N - p)$,
\item $q \ge N(p - 1)/(N - 1)$ and $s > Nq/(N - p)$.
\end{enumroman}
\end{theorem}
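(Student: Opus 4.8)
The plan is to prove that the mountain pass level $c$ of \eqref{108} satisfies $c < c^\ast$. Once this is known, the \PS{c} sequence supplied by the deformation argument has, by Proposition \ref{Proposition 201}, a subsequence converging weakly to a nontrivial critical point of $E$, i.e.\ a nontrivial weak solution of \eqref{101}. Moreover, for any fixed $u \in W^{1,p}_0(\Omega) \setminus \set{0}$ the reparametrized ray $t \mapsto tu$ (admissible since $E(tu) \to - \infty$ as $t \to + \infty$) lies in $\Gamma$, so $c \le \max_{t \ge 0} E(tu)$; hence it suffices to produce a single $u$ with $\max_{t \ge 0} E(tu) < c^\ast$.

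For this I would take $u = u_\eps := \eta_\eps\, U_\eps$, where $U_\eps$ is the Aubin--Talenti instanton for the $p$-Laplacian, normalized so that $\dint_{\R^N} |\nabla U_\eps|^p\, dx = \dint_{\R^N} |U_\eps|^{p^\ast} dx = S^{N/p}$, concentrated at the center $x_0$ of the ball $B_\rho(x_0)$ of \eqref{109} at scale $\eps$, so that $U_\eps(x) \sim \eps^{(N-p)/p(p-1)}\, |x - x_0|^{-(N-p)/(p-1)}$ and $|\nabla U_\eps(x)| \sim \eps^{(N-p)/p(p-1)}\, |x - x_0|^{-(N-1)/(p-1)}$ for $|x - x_0| \gg \eps$, and where $\eta_\eps$ is a smooth cutoff with $\eta_\eps \equiv 1$ on $B_{\eps^\alpha}(x_0)$, $\operatorname{supp} \eta_\eps \subset B_{2\eps^\alpha}(x_0)$, $0 \le \eta_\eps \le 1$, $|\nabla \eta_\eps| \le C\eps^{-\alpha}$, and truncation exponent
\[
\alpha := \frac{N - p}{p\,(N - 1)} \in (0,1).
\]
The value of $\alpha$ is the nonstandard ingredient. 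When $q < N(p-1)/(N-1)$ one has $\dint_{\R^N} |\nabla U_\eps|^q\, dx = + \infty$, so a fixed-radius cutoff ($\alpha = 0$) makes the $q$-gradient energy of $u_\eps$ only of order $\eps^{q(N-p)/p(p-1)}$, which is too small to be offset by the gain coming from \eqref{109}; shrinking the cutoff radius to $\eps^\alpha$ raises that energy while degrading the Sobolev energy of the instanton, and the displayed value of $\alpha$ is exactly the one equalizing these two competing errors. (In case (ii), where $\dint_{\R^N} |\nabla U_\eps|^q\, dx < \infty$, one may instead take $\alpha = 0$.)

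The change of variables $x = x_0 + \eps y$ and the decay rates above then give, with $\beta := (1 - \alpha)(N - p)/(p - 1) = N(N-p)/p(N-1)$ and $\kappa := N - (N - p)\, s/p$, the estimates
\[
\norm{u_\eps}^p \le S^{N/p} + C\eps^\beta, \qquad \dint_\Omega |u_\eps|^{p^\ast} dx = S^{N/p} + \o(\eps^\beta), \qquad \dint_\Omega |\nabla u_\eps|^q\, dx \le C\eps^\theta, \qquad \dint_\Omega u_\eps^s\, dx \ge C^{-1}\eps^\kappa,
\]
where $\theta := \min\set{N(p-q)/p,\ N(N-p)/p(N-1)}$ (with an extra factor $|\log \eps|$ only in the borderline case $q = N(p-1)/(N-1)$), and the last estimate uses that in either case $s > N(p-1)/(N-p)$, so that $U_\eps^s$ is integrable and the bulk of its mass lies in $B_{\eps^\alpha}(x_0)$. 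A short computation shows that the hypothesis of case (i), $s > N^2(p-1)/(N-1)(N-p)$, is equivalent to $\kappa < N(N-p)/p(N-1)$, and that of case (ii), $s > Nq/(N-p)$, to $\kappa < N(p-q)/p$; under the corresponding restriction on $q$ this quantity is the smaller of the two terms defining $\theta$, so $\kappa < \theta$ in both cases, and since $\theta \le \beta$ always, $\kappa < \min\set{\beta,\theta}$.

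It then remains to maximize $E(tu_\eps)$ over $t \ge 0$. Since $E(tu_\eps) \to - \infty$ uniformly for small $\eps$ and $0 < c \le E(t_\eps u_\eps) \le \frac{t_\eps^p}{p}\, \norm{u_\eps}^p + \frac{t_\eps^q}{q} \dint_\Omega |\nabla u_\eps|^q\, dx$ with the right-hand side uniformly bounded, the maximizer $t_\eps$ stays in a compact subinterval $[c_1,c_2] \subset (0,\infty)$ independent of $\eps$. As $F \ge 0$ on $B_\rho(x_0) \supset \operatorname{supp} u_\eps$, we may discard $- \dint_\Omega F(x, t_\eps u_\eps)\, dx \le 0$; then, using \eqref{109}, the identity $\max_{t \ge 0}\seq{\frac{t^p}{p}\, a - \frac{t^{p^\ast}}{p^\ast}\, d} = \frac{1}{N}\, a^{N/p}\, d^{-(N-p)/p}$, and the four estimates,
\[
c \le E(t_\eps u_\eps) \le c^\ast + C\eps^\beta + C\eps^\theta(1 + |\log \eps|) - C^{-1}b\, c_1^s\, \eps^\kappa < c^\ast
\]
for $\eps$ small, because $\kappa < \min\set{\beta,\theta}$. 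This gives $c < c^\ast$ and proves the theorem. The one genuinely delicate point is making the $q$-gradient estimate and the choice of $\alpha$ precise: the dichotomy in the statement is exactly the condition that the gain $\eps^{\,N - (N-p)s/p}$ from \eqref{109} dominate both the Sobolev error $\eps^{(1-\alpha)(N-p)/(p-1)}$ and the $q$-gradient term, which after optimizing over $\alpha$ equals $\eps^{\,q(N-p)/p(p-1) + \alpha\,\sigma}$ with $\sigma := N - q(N-1)/(p-1)$ in case (i) and $\eps^{\,N(p-q)/p}$ in case (ii); verifying that the optimal balance of these three exponents reproduces precisely the thresholds in (i) and (ii) is the only laborious computation.
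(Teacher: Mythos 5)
Your proposal is correct and follows essentially the same strategy as the paper's: truncate the Aubin--Talenti bubble at scale $\eps^\alpha$ (the paper writes $\delta_j=\eps_j^\kappa$ in case $(i)$ and $\delta_j=1$ in case $(ii)$, matching your $\alpha$ and your $\alpha=0$ respectively), estimate $\max_{t\ge 0}E(tu_\eps)$, and make the truncation-induced Sobolev defect and the $q$-gradient error both dominated by the gain from \eqref{109}. The only difference is presentational: the paper keeps the truncation exponent free and works out the admissible window via the inequalities $\underline{\kappa}<\kappa<\overline{\kappa}$, $\underline{\kappa}<1$, $\overline{\kappa}>0$, whereas you commit to $\alpha=(N-p)/p(N-1)$ --- precisely the value equalizing the two error exponents --- which collapses the verification to the single inequality $s>N^2(p-1)/(N-1)(N-p)$ and is a cleaner but equivalent route to the same thresholds.
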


\begin{remark}
We note that the two cases in Theorem \ref{Theorem 101} can be combined as
\[
s > \max \set{\frac{N^2 (p - 1)}{(N - 1)(N - p)},\frac{Nq}{N - p}}.
\]
\end{remark}

In particular, we have the following corollary for the model problem
\begin{equation} \label{110}
\left\{\begin{aligned}
- \Delta_p\, u - \Delta_q\, u & = b\, |u|^{s-2}\, u + |u|^{p^\ast - 2}\, u && \text{in } \Omega\\[10pt]
u & = 0 && \text{on } \bdry{\Omega},
\end{aligned}\right.
\end{equation}
where $1 < p < N$.

\begin{corollary} \label{Corollary 103}
Problem \eqref{110} has a nontrivial weak solution for all $b > 0$ in each of the following cases:
\begin{enumroman}
\item $1 < q < N(p - 1)/(N - 1)$ and $N^2 (p - 1)/(N - 1)(N - p) < s < p^\ast$,
\item $N(p - 1)/(N - 1) \le q < p$ and $Nq/(N - p) < s < p^\ast$.
\end{enumroman}
\end{corollary}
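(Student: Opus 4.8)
The plan is to deduce the corollary from Theorem \ref{Theorem 101} by verifying that the model problem \eqref{110} is a special case of the general problem \eqref{101} with an appropriate choice of nonlinearity $f$. Take $f(x,t) = b\, |t|^{s-2}\, t$, which is independent of $x$, so that $F(x,t) = (b/s)\, |t|^s$; note that $|u|^{p^\ast-2}\, u$ is already the critical term appearing in \eqref{101}. We must check \eqref{102}, \eqref{103}, \eqref{107}, and \eqref{109}, and then confirm that the dichotomy on $q$ and the lower bound on $s$ in Theorem \ref{Theorem 101} match the two cases (i), (ii) of the corollary.

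Condition \eqref{102} is immediate since $f(x,0) = 0$. For \eqref{103}, the subcritical range $1 < s < p^\ast$ is split at $s = p$: if $p \le s < p^\ast$ then $|f(x,t)| \le b\, |t|^{s-1}$ gives \eqref{103} with $r = s$ (when $s > p$) or with any $r \in (p,p^\ast)$ after absorbing $|t|^{p-1}$ into $a_1\, |t|^{r-1} + a_2$ on bounded sets; if $1 < s \le p$ then $|t|^{s-1} \le 1 + |t|^{r-1}$ for any fixed $r \in (p,p^\ast)$, so \eqref{103} again holds. For \eqref{107}, since $s > q$ we have $(b/s)\, |t|^s = \o(|t|^q)$ as $t \to 0$, so for every $\lambda \in (0,\lambda_1)$ there is $\delta > 0$ with $(b/s)\, |t|^s \le (\mu_1/q)\, |t|^q \le (\lambda/p)\, |t|^p + (\mu_1/q)\, |t|^q$ for $|t| < \delta$; in fact one may even discard the $|t|^p$ term. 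For \eqref{109}, fix any ball $B_\rho(x_0) \subset \Omega$ and observe that $F(x,t) = (b/s)\, |t|^s \ge \widetilde{b}\, t^s$ for all $t \ge 0$ with $\widetilde{b} = b/s > 0$; since $b > 0$ is arbitrary, so is $\widetilde{b}$, and $s \in (q,p^\ast)$ is exactly the hypothesis on $s$ in the two cases. Thus all hypotheses of Theorem \ref{Theorem 101} hold.

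It remains only to match the case conditions. Theorem \ref{Theorem 101}(i) requires $q < N(p-1)/(N-1)$ and $s > N^2(p-1)/(N-1)(N-p)$, which together with $s < p^\ast$ is precisely case (i) of the corollary; Theorem \ref{Theorem 101}(ii) requires $q \ge N(p-1)/(N-1)$ and $s > Nq/(N-p)$, which with $s < p^\ast$ and $q < p$ is precisely case (ii). Hence in either case Theorem \ref{Theorem 101} yields a nontrivial weak solution of \eqref{110}, and since the choice of $b > 0$ was arbitrary the conclusion holds for all $b > 0$. There is no real obstacle here: the only point requiring a little care is the verification of \eqref{103} in the subrange $1 < s \le p$, where $f$ grows slower than $|t|^{p-1}$ and one must pad the estimate with the additive constant $a_2$ and a harmless higher-power term $a_1\, |t|^{r-1}$ with $r \in (p,p^\ast)$; everything else is a direct substitution.
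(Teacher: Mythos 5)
Your proposal is correct and takes essentially the same route the paper implicitly intends: specializing Theorem \ref{Theorem 101} with $f(x,t) = b\,|t|^{s-2}t$ and verifying \eqref{102}, \eqref{103}, \eqref{107}, \eqref{109}. The paper states Corollary \ref{Corollary 103} without proof as a direct instance of Theorem \ref{Theorem 101}, and your verification (including the careful padding of \eqref{103} for $s \le p$ and the observation that the case hypotheses force $s > q$ so that \eqref{107} and \eqref{109} hold) fills in exactly the intended details.
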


\begin{remark}
It was shown in Yin and Yang \cite{MR2890966} that problem \eqref{110} has a nontrivial solution when $p < s < p^\ast$ and $b > 0$ is sufficiently large. In contrast, Corollary \ref{Corollary 103} allows $s \le p$ and gives a nontrivial solution for all $b > 0$. It also gives a nontrivial solution for all $s \in (p,p^\ast)$ and $b > 0$ when $N^2 - p\, (p + 1) N + p^2 \ge 0$ and $q \le (N - p)\, p/N$, and for $s = p$ and all $b > 0$ when $N^2 - p\, (p + 1) N + p^2 > 0$ and $q < (N - p)\, p/N$.
\end{remark}

When $p \le 2 - 1/N$, case ({\em i}) in Corollary \ref{Corollary 103} cannot hold and the first inequality in case ({\em ii}) holds for $q > 1$, so we have the following corollary.

\begin{corollary} \label{Corollary 105}
If $1 < q < p \le 2 - 1/N$ and $Nq/(N - p) < s < Np/(N - p)$, then problem \eqref{110} has a nontrivial weak solution for all $b > 0$.
\end{corollary}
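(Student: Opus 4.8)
The plan is to obtain this as an immediate consequence of Corollary \ref{Corollary 103}. The only thing that needs verification is that, under the standing hypothesis $p \le 2 - 1/N$, alternative ({\em i}) of that corollary is vacuous while alternative ({\em ii}) applies with exactly the stated range of $s$.

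First I would record the elementary equivalence
\[
p \le 2 - \frac{1}{N} \iff N(p - 1) \le N - 1 \iff \frac{N(p - 1)}{N - 1} \le 1 .
\]
Since $q > 1$ by assumption, this yields $q > N(p - 1)/(N - 1)$; hence the hypothesis $q < N(p - 1)/(N - 1)$ of case ({\em i}) never holds, whereas the hypothesis $q \ge N(p - 1)/(N - 1)$ of case ({\em ii}) holds automatically. I would also note the background requirement $1 < q < p < N$: the inequalities $1 < q < p$ are assumed, and $p \le 2 - 1/N < 2 \le N$ (using $N \ge 2$) gives $p < N$.

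Finally I would match the admissible range of $s$. Case ({\em ii}) of Corollary \ref{Corollary 103} requires $Nq/(N - p) < s < p^\ast$, and since $p^\ast = Np/(N - p)$ this is precisely the interval assumed here, which is nonempty because $q < p$. Applying Corollary \ref{Corollary 103}({\em ii}) then produces a nontrivial weak solution of \eqref{110} for every $b > 0$. I expect no real obstacle here: all the analytic content — the mountain pass scheme, the estimate $c < c^\ast$, and the compactness of Palais–Smale sequences below $c^\ast$ — is already contained in Theorem \ref{Theorem 101} and Corollary \ref{Corollary 103}, so this corollary amounts only to identifying when the first alternative degenerates.
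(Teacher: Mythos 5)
Your proof is correct and follows exactly the paper's route: the paper's justification for Corollary \ref{Corollary 105} is precisely the observation that $p \le 2 - 1/N$ forces $N(p-1)/(N-1) \le 1 < q$, so case $(i)$ of Corollary \ref{Corollary 103} is vacuous and case $(ii)$'s lower bound on $q$ is automatic, leaving only the stated range for $s$. Your added checks ($p < N$, nonemptiness of the $s$-interval) are sound but minor bookkeeping the paper leaves implicit.
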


For the borderline case $s = q$ of problem \eqref{110} we prove a Poho\v{z}aev type nonexistence result. Recall that the corresponding nonexistence result for the $p$-Laplacian states that the problem
\[
\left\{\begin{aligned}
- \Delta_p\, u & = \lambda\, |u|^{p-2}\, u + |u|^{p^\ast-2}\, u && \text{in } \Omega\\[10pt]
u & = 0 && \text{on } \bdry{\Omega}
\end{aligned}\right.
\]
has no nontrivial weak solution in $W^{1,\,p}_0(\Omega)$ for $\lambda \le 0$ when $\Omega$ is a star-shaped domain with $C^1$-boundary (see Guedda and V{\'e}ron \cite[Corollaries 1.2 \& 1.3]{MR1009077}). In contrast, we will show that the problem
\begin{equation} \label{111}
\left\{\begin{aligned}
- \Delta_p\, u - \Delta_q\, u & = \mu\, |u|^{q-2}\, u + |u|^{p^\ast - 2}\, u && \text{in } \Omega\\[10pt]
u & = 0 && \text{on } \bdry{\Omega}
\end{aligned}\right.
\end{equation}
has no nontrivial weak solution even for small positive $\mu$.

\begin{theorem} \label{Theorem 106}
Let $1 < q < p < N$. If $\Omega$ is a star-shaped domain with $C^1$-boundary and
\begin{equation} \label{112}
\mu \le \frac{N(p - q)}{N(p - q) + pq}\, \mu_1,
\end{equation}
then problem \eqref{111} has no nontrivial weak solution in $W^{1,\,p}_0(\Omega) \cap W^{2,\,p}(\Omega)$.
\end{theorem}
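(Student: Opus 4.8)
The plan is to establish a Pohožaev-type identity for problem \eqref{111} and then extract from it an inequality that is incompatible with the existence of a nontrivial solution under hypothesis \eqref{112}. Assume $u \in W^{1,\,p}_0(\Omega) \cap W^{2,\,p}(\Omega)$ is a nontrivial weak solution. First I would record the ``energy'' identity obtained by testing the equation with $u$ itself: $\int_\Omega |\nabla u|^p + \int_\Omega |\nabla u|^q = \mu \int_\Omega |u|^q + \int_\Omega |u|^{p^\ast}$. Next, and this is the substantive step, I would derive the Pohožaev identity by testing with the field $x \cdot \nabla u$ (the dilation generator). Concretely, one multiplies $-\Delta_p u - \Delta_q u = \mu |u|^{q-2}u + |u|^{p^\ast-2}u$ by $x\cdot\nabla u$ and integrates by parts; since $\Omega$ is star-shaped with $C^1$ boundary and $u\in W^{2,p}$, the boundary terms for the $p$- and $q$-parts carry a favorable sign (they are $-\frac{1}{p}\int_{\partial\Omega}|\nabla u|^p\, (x\cdot n)\, d\sigma$ and $-\frac{1}{q}\int_{\partial\Omega}|\nabla u|^q\, (x\cdot n)\, d\sigma$ respectively, with $x\cdot n \ge 0$ after translating the star center to the origin). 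This should be exactly the content of Theorem~\ref{Theorem 108} cited in the introduction, which I will invoke: it yields
\[
\Bigl(\tfrac{N-p}{p}\Bigr)\!\int_\Omega |\nabla u|^p + \Bigl(\tfrac{N-q}{q}\Bigr)\!\int_\Omega |\nabla u|^q + (\text{nonnegative boundary terms}) = \tfrac{N\mu}{q}\!\int_\Omega |u|^q + \tfrac{N}{p^\ast}\!\int_\Omega |u|^{p^\ast}.
\]

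Then I would combine the two identities to eliminate the critical term. Using $N/p^\ast = (N-p)/p$, subtracting $\frac{N-p}{p}$ times the energy identity from the Pohožaev identity kills the $\int_\Omega|\nabla u|^p$ terms and the $\int_\Omega|u|^{p^\ast}$ terms, leaving
\[
\Bigl(\tfrac{N-q}{q} - \tfrac{N-p}{p}\Bigr)\!\int_\Omega |\nabla u|^q + (\text{nonnegative boundary terms}) = \Bigl(\tfrac{N}{q} - \tfrac{N-p}{p}\Bigr)\mu\!\int_\Omega |u|^q.
\]
A short computation gives $\frac{N-q}{q} - \frac{N-p}{p} = \frac{N(p-q)}{pq} > 0$ and $\frac{N}{q} - \frac{N-p}{p} = \frac{N(p-q)+pq}{pq} > 0$. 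Dropping the nonnegative boundary terms and using the variational characterization $\mu_1 \le \int_\Omega|\nabla u|^q / \int_\Omega|u|^q$ (valid since $u\neq 0$), I obtain
\[
\frac{N(p-q)}{pq}\,\mu_1 \int_\Omega |u|^q \le \frac{N(p-q)}{pq}\int_\Omega |\nabla u|^q \le \frac{N(p-q)+pq}{pq}\,\mu \int_\Omega |u|^q,
\]
which forces $\mu \ge \frac{N(p-q)}{N(p-q)+pq}\,\mu_1$, contradicting \eqref{112} unless $\int_\Omega|u|^q = 0$, i.e. $u \equiv 0$.

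The main obstacle is the rigorous justification of the Pohožaev identity for the $(p,q)$-Laplacian: the integration by parts with the vector field $x\cdot\nabla u$ requires enough regularity to make sense of $\int_\Omega \Delta_p u\,(x\cdot\nabla u)\,dx$ and to handle the boundary terms, which is precisely why the $W^{2,p}(\Omega)$ hypothesis is imposed and why the separate Theorem~\ref{Theorem 108} is needed. The sign of the boundary integrals relies on $u=0$ on $\partial\Omega$ (so $\nabla u$ is parallel to $n$ there, giving $|\nabla u|^p(x\cdot n) = |\partial_n u|^p (x\cdot n)$) together with star-shapedness ($x\cdot n\ge 0$); I would cite Theorem~\ref{Theorem 108} for the clean statement and simply plug in $f(x,t) = \mu|t|^{q-2}t$, for which $F(x,t) = \frac{\mu}{q}|t|^q$, to get the displayed form. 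Once the identity is in hand, the algebraic elimination above is routine, and the borderline constant in \eqref{112} emerges exactly as the ratio that makes the final inequality an equality.
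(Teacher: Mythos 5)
Your approach is the same as the paper's: invoke the Poho\v{z}aev identity of Theorem~\ref{Theorem 108} (in the pre-subtraction form \eqref{302}), combine with the energy identity obtained by testing with $u$, eliminate the critical term using $N/p^\ast = (N-p)/p$, and then play the resulting inequality off the variational characterization of $\mu_1$. The algebra is correct and matches \eqref{304}.

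However, there is a genuine gap at the very end. After dropping the nonnegative boundary integral and using $\mu_1 \int_\Omega |u|^q \le \int_\Omega |\nabla u|^q$, you arrive at
\[
\frac{N(p-q)}{pq}\,\mu_1 \int_\Omega |u|^q \;\le\; \frac{N(p-q)+pq}{pq}\,\mu \int_\Omega |u|^q ,
\]
i.e.\ $\mu \ge \frac{N(p-q)}{N(p-q)+pq}\,\mu_1$ (since $u \not\equiv 0$). You then say this ``contradicts'' \eqref{112}. It does not: hypothesis \eqref{112} is the \emph{non-strict} inequality $\mu \le \frac{N(p-q)}{N(p-q)+pq}\,\mu_1$, and the two are perfectly compatible when $\mu = \frac{N(p-q)}{N(p-q)+pq}\,\mu_1$. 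So your argument only rules out the case of strict inequality in \eqref{112}, not equality. To close the equality case you must not discard the boundary term: when $\mu = \frac{N(p-q)}{N(p-q)+pq}\,\mu_1$, every inequality in the chain becomes an equality, so simultaneously (a) the boundary integral $\int_{\partial\Omega}\bigl[(1-\frac1p)|\partial u/\partial\nu|^p + (1-\frac1q)|\partial u/\partial\nu|^q\bigr](x\cdot\nu)\,d\sigma$ vanishes, forcing $\partial u/\partial\nu \equiv 0$ on $\partial\Omega$ (since $x\cdot\nu>0$ on a star-shaped boundary), and (b) $\mu_1\int_\Omega|u|^q = \int_\Omega|\nabla u|^q$, so $u$ is a $\mu_1$-eigenfunction of the $q$-Laplacian. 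These two facts together contradict the Hopf--V\'azquez boundary point lemma. This final step is exactly what the paper uses and your proof omits it.

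A minor slip, not affecting the argument: in your parenthetical description you write the boundary contributions as $-\frac1p\int_{\partial\Omega}|\nabla u|^p(x\cdot n)\,d\sigma$ and $-\frac1q\int_{\partial\Omega}|\nabla u|^q(x\cdot n)\,d\sigma$. The correct coefficients are $1-\frac1p$ and $1-\frac1q$ (with the sign depending on which side of the identity they are placed), as in \eqref{114} and \eqref{302}; your subsequent displayed identity with ``(nonnegative boundary terms)'' is nevertheless correct.
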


\begin{remark}
It was shown in Li and Zhang \cite{MR2509998} that problem \eqref{110} has infinitely many solutions when $1 < s < q$ and $b > 0$ is sufficiently small. Theorem \ref{Theorem 106} shows that such a result cannot hold in general in the borderline case $s = q$.
\end{remark}

To prove Theorem \ref{Theorem 106} we will first derive a Poho\v{z}aev type identity for the $(p,q)$-Laplacian operator that is of independent interest. Consider the problem
\begin{equation} \label{113}
\left\{\begin{aligned}
- \Delta_p\, u - \Delta_q\, u & = g(u) && \text{in } \Omega\\[10pt]
u & = 0 && \text{on } \bdry{\Omega},
\end{aligned}\right.
\end{equation}
where $1 < q < p < N$ and $g$ is a continuous function on $\R$. Let $G(t) = \int_0^t g(\tau)\, d\tau$ be the primitive of $g$.

\begin{theorem} \label{Theorem 108}
If $\Omega$ has $C^1$-boundary and $u \in W^{1,\,p}_0(\Omega) \cap W^{2,\,p}(\Omega)$ is a weak solution of problem \eqref{113}, then
\begin{multline} \label{114}
\left(\frac{1}{q} - \frac{1}{p}\right) \int_\Omega |\nabla u|^q\, dx - \int_\Omega \left[G(u) - \frac{1}{p^\ast}\, u\, g(u)\right] dx\\[7.5pt]
+ \frac{1}{N} \int_{\bdry{\Omega}} \left[\left(1 - \frac{1}{p}\right) \abs{\frac{\partial u}{\partial \nu}}^p + \left(1 - \frac{1}{q}\right) \abs{\frac{\partial u}{\partial \nu}}^q\right](x \cdot \nu)\, d\sigma = 0,
\end{multline}
where $\nu$ is the exterior unit normal to $\bdry{\Omega}$.
\end{theorem}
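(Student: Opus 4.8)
The plan is to obtain \eqref{114} by combining two integral identities satisfied by a weak solution $u$ of \eqref{113}: the \emph{energy identity}, obtained by testing the equation with $u$ itself, and a Poho\v{z}aev type identity, obtained by testing, at least formally, with the dilation field $x \cdot \nabla u$. Throughout, I would use that, since $\Omega$ is bounded and $q < p$, the solution $u \in W^{1,\,p}_0(\Omega) \cap W^{2,\,p}(\Omega)$ also lies in $W^{2,\,q}(\Omega)$, and that $\partial u/\partial\nu$ has a well-defined trace in $L^p(\bdry{\Omega})$, so that all integrals appearing below make sense.

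First, taking $v = u$ in the weak formulation of \eqref{113} gives the energy identity
\begin{equation*}
\int_\Omega |\nabla u|^p\, dx + \int_\Omega |\nabla u|^q\, dx = \int_\Omega u\, g(u)\, dx. \tag{N}
\end{equation*}
The substantive ingredient is the Poho\v{z}aev type identity
\begin{multline*}
\left(\frac{N}{p} - 1\right) \int_\Omega |\nabla u|^p\, dx + \left(\frac{N}{q} - 1\right) \int_\Omega |\nabla u|^q\, dx \\
+ \int_{\bdry{\Omega}} \left[\left(1 - \frac{1}{p}\right) \abs{\frac{\partial u}{\partial \nu}}^p + \left(1 - \frac{1}{q}\right) \abs{\frac{\partial u}{\partial \nu}}^q\right] (x \cdot \nu)\, d\sigma = N \int_\Omega G(u)\, dx, \tag{P}
\end{multline*}
which formally results from multiplying the equation by $x \cdot \nabla u$ and integrating over $\Omega$. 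On the right-hand side, since $u = 0$ on $\bdry{\Omega}$ and $G(0) = 0$,
\[
\int_\Omega g(u)\,(x \cdot \nabla u)\, dx = \int_\Omega x \cdot \nabla\big(G(u)\big)\, dx = \int_{\bdry{\Omega}} (x \cdot \nu)\, G(u)\, d\sigma - N \int_\Omega G(u)\, dx = - N \int_\Omega G(u)\, dx.
\]
For $r \in \set{p,q}$, integrating by parts and using $\nabla(x \cdot \nabla u) = \nabla u + (x \cdot \nabla)\nabla u$ together with the pointwise identity $|\nabla u|^{r-2}\,\nabla u \cdot (x \cdot \nabla)\nabla u = \tfrac{1}{r}\, x \cdot \nabla\big(|\nabla u|^r\big)$, one obtains
\[
\int_\Omega (- \Delta_r u)\,(x \cdot \nabla u)\, dx = \left(1 - \frac{N}{r}\right) \int_\Omega |\nabla u|^r\, dx - \left(1 - \frac{1}{r}\right) \int_{\bdry{\Omega}} \abs{\frac{\partial u}{\partial \nu}}^r (x \cdot \nu)\, d\sigma,
\]
where on $\bdry{\Omega}$ one uses that the tangential derivatives of $u$ vanish, so that $\nabla u = (\partial u/\partial\nu)\,\nu$ there. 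Adding the cases $r = p$ and $r = q$ and equating with $-N \int_\Omega G(u)\, dx$ yields (P).

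I expect the main obstacle to be the rigorous justification of the derivation of (P), which as written is only formal: $x \cdot \nabla u$ is not in general an admissible test function for \eqref{113}, since it need not vanish on $\bdry{\Omega}$, and when $1 < r < 2$ the vector field $|\nabla u|^{r-2}\,\nabla u$ need not be weakly differentiable at points where $\nabla u$ vanishes, so the integration by parts is not directly licit. I would deal with this in the spirit of Guedda and V\'eron \cite{MR1009077}: first use interior regularity for the $(p,q)$-Laplacian, together with $u \in W^{2,\,p}(\Omega)$, to get $u \in C^{1,\,\alpha}_{\mathrm{loc}}(\Omega)$; then carry out the computation on an interior exhaustion of $\Omega$ by smooth subdomains, treating the contributions near $\set{\nabla u = 0}$ by a cut-off; and finally pass to the limit, the boundary integrals being meaningful since $\bdry{\Omega}$ is $C^1$ and $\partial u/\partial\nu \in L^p(\bdry{\Omega})$. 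Once (N) and (P) are established, \eqref{114} follows by elementary algebra: dividing (P) by $N$, using $\tfrac1p - \tfrac1N = \tfrac1{p^\ast}$ in the coefficient of $\int_\Omega |\nabla u|^p\, dx$, eliminating $\int_\Omega |\nabla u|^p\, dx$ via (N), and noting that the resulting coefficient of $\int_\Omega |\nabla u|^q\, dx$ reduces to $\tfrac1q - \tfrac1p$ (again because $\tfrac1{p^\ast} = \tfrac1p - \tfrac1N$); rearranging the outcome gives exactly \eqref{114}.
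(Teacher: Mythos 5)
Your proposal is correct and follows essentially the same route as the paper: you establish the Poho\v{z}aev-type identity (P) (the paper's equation \eqref{302}) and the energy identity (N) (the paper's \eqref{303}), then combine them algebraically to get \eqref{114}. The only cosmetic difference is that the paper packages the derivation of (P) as a single pointwise divergence identity which it integrates over $\Omega$, whereas you carry out the equivalent integrations by parts term by term; your extra discussion of the rigorous justification (inadmissibility of $x\cdot\nabla u$ as a test function, weak differentiability of $|\nabla u|^{r-2}\nabla u$ for $r<2$) goes beyond what the paper records, which simply calls the divergence identity ``easily verified'' and invokes $u\in W^{2,\,p}(\Omega)$.
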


Finally we prove a stronger existence result for the related problem
\begin{equation} \label{115}
\left\{\begin{aligned}
- \Delta_p\, u - \nu\, \Delta_q\, u & = f(x,u) + |u|^{p^\ast - 2}\, u && \text{in } \Omega\\[10pt]
u & = 0 && \text{on } \bdry{\Omega}
\end{aligned}\right.
\end{equation}
when the parameter $\nu > 0$ is sufficiently small.

\begin{theorem} \label{Theorem 109}
Let $1 < q < p < N$ and assume \eqref{102}, \eqref{103}, \eqref{107}, and \eqref{109}. Then there exists $\nu_0 > 0$ such that problem \eqref{115} has a nontrivial weak solution for all $\nu \in (0,\nu_0)$ in each of the following cases:
\begin{enumroman}
\item $N \ge p^2$ and $q < s < p^\ast$,
\item $N < p^2$ and either $q < s < p$ or $(Np - 2N + p)\, p/(N - p)(p - 1) < s < p^\ast$.
\end{enumroman}
\end{theorem}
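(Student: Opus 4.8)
The plan is to obtain the solution as a mountain-pass critical point of the energy functional $E_\nu(u) = \frac1p \int_\Omega |\nabla u|^p + \frac\nu q \int_\Omega |\nabla u|^q - \int_\Omega F(x,u) - \frac1{p^\ast}\int_\Omega |u|^{p^\ast}$ of problem \eqref{115}, following the scheme used for Theorem \ref{Theorem 101}; the only genuinely new feature is the bookkeeping of the parameter $\nu$ in the estimate of the mountain-pass level. Under \eqref{102}, \eqref{103} and \eqref{107} the functional $E_\nu$ has, as in Section 2, the mountain-pass geometry, and hence a \PS{c_\nu} sequence with $c_\nu>0$ the level defined as in \eqref{108}. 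The extra lower-order term $\frac\nu q\int |\nabla u|^q$ does not affect the compactness threshold $c^\ast = \frac1N\, S^{N/p}$ (loss of compactness comes only through $p^\ast$-bubbling), so by the analogue of Proposition \ref{Proposition 201} for $E_\nu$ it suffices to show that $c_\nu < c^\ast$ for all sufficiently small $\nu > 0$ in each of the two cases.

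To estimate $c_\nu$ I would test with truncated Aubin--Talenti instantons concentrated at the centre $x_0$ of the ball $B_\rho(x_0)$ furnished by \eqref{109}: let $U_\eps$ be the extremals of \eqref{105} on $\R^N$ centred at $x_0$ and normalised so that $\int_{\R^N} |\nabla U_\eps|^p = \int_{\R^N}|U_\eps|^{p^\ast} = S^{N/p}$, pick $\varphi \in C^\infty_c(B_\rho(x_0))$ with $\varphi \equiv 1$ near $x_0$ and $0 \le \varphi \le 1$, and set $u_\eps = \varphi\, U_\eps$ (the same test functions as for Theorem \ref{Theorem 101}). As $\eps \to 0^+$ one has the standard expansions $\int_\Omega |\nabla u_\eps|^p = S^{N/p} + O(\eps^{(N-p)/(p-1)})$ and $\int_\Omega |u_\eps|^{p^\ast} = S^{N/p} + O(\eps^{N/(p-1)})$, while $\int_\Omega |\nabla u_\eps|^q < \infty$ for each $\eps$ (and tends to $0$), and $\int_\Omega |u_\eps|^s \asymp \eps^{\beta_s}$, where $\beta_s = (Np - s(N-p))/p$ if $s > N(p-1)/(N-p)$ and $\beta_s = s(N-p)/[p(p-1)]$ if $s < N(p-1)/(N-p)$, with a $\abs{\log\eps}$ factor (which only helps) at the borderline value. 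Since $u_\eps \ge 0$ is supported in $B_\rho(x_0)$, \eqref{109} gives $E_\nu(tu_\eps) \le \frac{t^p}p A_\eps + \frac{\nu t^q}q B_\eps - b\, t^s D_\eps - \frac{t^{p^\ast}}{p^\ast} C_\eps =: g_\nu(t)$ for $t \ge 0$, where $A_\eps, B_\eps, C_\eps, D_\eps$ are the four integrals above. Because $g_\nu(t)\to-\infty$ and $A_\eps,C_\eps$ are bounded and bounded away from $0$ while the remaining two terms are small uniformly, $g_\nu$ attains its maximum at some $\bar t$ lying in a fixed compact subinterval of $(0,\infty)$ for all small $\eps,\nu$; bounding the principal part $\frac{t^p}p A_\eps - \frac{t^{p^\ast}}{p^\ast}C_\eps$ by its maximum $\frac1N(A_\eps^{p^\ast}/C_\eps^p)^{(N-p)/p^2} = c^\ast + O(\eps^{(N-p)/(p-1)})$ and evaluating the remaining terms at $\bar t$ yields
\[
c_\nu \le \max_{t \ge 0} E_\nu(tu_\eps) \le c^\ast + C_1 \eps^{(N-p)/(p-1)} + C_2 \nu B_\eps - C_3 b\, \eps^{\beta_s},
\]
with constants $C_1, C_2, C_3 > 0$ independent of $\eps,\nu$ (the last term uses the lower bound $D_\eps \gtrsim \eps^{\beta_s}$).

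Assuming $\beta_s < (N-p)/(p-1)$, I would then run a two-parameter limit: first fix $\eps > 0$ so small that $C_1\eps^{(N-p)/(p-1)} < \frac13 C_3 b\, \eps^{\beta_s}$, and with this $\eps$ now frozen (so that $B_\eps$ is a fixed finite constant) choose $\nu_0 > 0$ so small that $C_2 \nu B_\eps < \frac13 C_3 b\, \eps^{\beta_s}$ for all $\nu \in (0,\nu_0)$; then $c_\nu < c^\ast - \frac13 C_3 b\, \eps^{\beta_s} < c^\ast$, completing the argument. It remains to check that the hypotheses in (i)--(ii) are precisely those guaranteeing $\beta_s < (N-p)/(p-1)$, the borderline value $s = N(p-1)/(N-p)$ being included exactly when $N \ge p^2$. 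A direct computation gives: for $s < N(p-1)/(N-p)$ the inequality $\beta_s < (N-p)/(p-1)$ is equivalent to $s < p$; for $s > N(p-1)/(N-p)$ it is equivalent to $s > s_0 := (Np - 2N + p)\, p/[(N-p)(p-1)]$; and since $s_0 \le N(p-1)/(N-p) \iff N \ge p^2 \iff s_0 \le p$, these combine to give $\beta_s < (N-p)/(p-1)$ for \emph{every} $s \in (q,p^\ast)$ when $N \ge p^2$, and for $s \in (q,p) \cup (s_0, p^\ast)$ when $N < p^2$ --- which are exactly cases (i) and (ii).

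The main obstacle is the sharp asymptotic analysis behind the displayed estimate: one needs the full self-energy expansion $\int |\nabla u_\eps|^p = S^{N/p} + O(\eps^{(N-p)/(p-1)})$ (not merely its leading term) together with the two-sided estimate $\int |u_\eps|^s \asymp \eps^{\beta_s}$ and the correct borderline behaviour, and one must take the two limits in the right order --- $\eps \to 0$ first, to defeat the self-energy error $\eps^{(N-p)/(p-1)}$, then $\nu \to 0$, to defeat the $q$-Laplacian contribution $\nu B_\eps$. This ordering is exactly what allows the admissible range of $s$ to widen from that of Theorem \ref{Theorem 101} (where the self-energy error and the $q$-Laplacian term must be beaten simultaneously, which forces in addition that $\beta_s$ be smaller than the decay rate of $\int|\nabla u_\eps|^q$) down to the single condition $\beta_s < (N-p)/(p-1)$.
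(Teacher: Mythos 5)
Your proposal is correct and follows essentially the same route as the paper: both test with the truncated Aubin--Talenti bubbles, exploit the two-parameter ordering (first $\eps\to 0$ to defeat the self-energy error $\Theta(\eps^{(N-p)/(p-1)})$, then $\nu\to 0$ to kill the now-fixed $q$-Laplacian contribution), and your condition $\beta_s<(N-p)/(p-1)$ is exactly the paper's requirement $\eps^{(N-p)/(p-1)}/\int v_\eps^s\,dx\to 0$ from Lemma~\ref{Lemma 303}, including the same case split at $s=N(p-1)/(N-p)$ and the same role of $N\gtrless p^2$. The only cosmetic difference is that the paper packages the final small-$\nu$ step as a fixed-path perturbation argument (show $c_0<c^\ast$, pick $\gamma_0\in\Gamma_0$, observe $\gamma_0\in\Gamma_\nu$ with $\max E_\nu<c^\ast$ for small $\nu$), whereas you estimate $\max_t E_\nu(tu_\eps)$ directly with $\eps$ frozen; these are equivalent.
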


\begin{remark}
We note that $p < (Np - 2N + p)\, p/(N - p)(p - 1)$ when $N < p^2$.
\end{remark}

In particular, we have the following corollary for the model problem
\begin{equation} \label{116}
\left\{\begin{aligned}
- \Delta_p\, u - \nu\, \Delta_q\, u & = b\, |u|^{s-2}\, u + |u|^{p^\ast - 2}\, u && \text{in } \Omega\\[10pt]
u & = 0 && \text{on } \bdry{\Omega},
\end{aligned}\right.
\end{equation}
where $1 < q < p < N$.

\begin{corollary} \label{Corollary 111}
There exists $\nu_0 > 0$ such that problem \eqref{116} has a nontrivial weak solution for all $\nu \in (0,\nu_0)$ and $b > 0$ in each of the following cases:
\begin{enumroman}
\item $N \ge p^2$ and $q < s < p^\ast$,
\item $N < p^2$ and either $q < s < p$ or $(Np - 2N + p)\, p/(N - p)(p - 1) < s < p^\ast$.
\end{enumroman}
\end{corollary}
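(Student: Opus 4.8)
The plan is to obtain Corollary~\ref{Corollary 111} as an immediate specialization of Theorem~\ref{Theorem 109} to the reaction term $f(x,t) = b\,|t|^{s-2}\,t$, whose primitive is $F(x,t) = (b/s)\,|t|^s$. Since the ranges of $s$ in cases (i) and (ii) of the corollary are verbatim those of Theorem~\ref{Theorem 109}, and this $f$ does not depend on $x$, all that is needed is to check that $f$ satisfies hypotheses \eqref{102}, \eqref{103}, \eqref{107}, and \eqref{109} for every $b > 0$; the value $\nu_0 > 0$ and the nontrivial weak solution of \eqref{116} for $\nu \in (0,\nu_0)$ are then produced directly by Theorem~\ref{Theorem 109}.

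Verifying \eqref{102} is trivial since $s > q > 1$. For \eqref{103}, fix any $r$ with $\max\set{p,s} < r < p^\ast$; this interval is nonempty because $p < p^\ast$ and $s < p^\ast$. Then $b\,|t|^{s-1} \le b$ for $|t| \le 1$ and $b\,|t|^{s-1} \le b\,|t|^{r-1}$ for $|t| > 1$, so \eqref{103} holds with $a_1 = a_2 = b$ and $r \in (p,p^\ast)$. For \eqref{107}, pick any $\lambda \in (0,\lambda_1)$: since $s > q$, the bound $(b/s)\,|t|^s \le (\mu_1/q)\,|t|^q$ holds for all $|t| < \delta$ with $\delta := (s\mu_1/bq)^{1/(s-q)} > 0$, and adjoining the nonnegative term $(\lambda/p)\,|t|^p$ on the right only strengthens the inequality. (The threshold $\delta$ decreases as $b$ increases, but it is positive for every $b > 0$, which is all \eqref{107} requires.) Finally, \eqref{109} holds for any ball $B_\rho(x_0) \subset \Omega$ because $F(x,t) = (b/s)\,t^s$ for $t \ge 0$: one takes the constant in \eqref{109} to be $b/s$ rather than $b$, with the same exponent $s \in (q,p^\ast)$.

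With the four hypotheses in hand, Theorem~\ref{Theorem 109} applies and yields exactly the conclusion of Corollary~\ref{Corollary 111}. I do not expect a genuine obstacle here, as the corollary is essentially a dictionary translation of the theorem. The only points demanding minor care are keeping the two meanings of the letter ``$b$'' apart (the coefficient in \eqref{116} versus the structural constant in \eqref{109}, which here equals $b/s$) and noting that the exponent $r$ in \eqref{103} can always be chosen strictly between $p$ and $p^\ast$, whether $s \le p$ or $s$ is close to $p^\ast$.
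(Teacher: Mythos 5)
Your proposal is correct and takes exactly the paper's (implicit) approach: the paper presents Corollary~\ref{Corollary 111} as an immediate specialization of Theorem~\ref{Theorem 109} to $f(x,t) = b\,|t|^{s-2}t$, and you simply supply the routine verification of hypotheses \eqref{102}, \eqref{103}, \eqref{107}, and \eqref{109} that the paper leaves unstated. Your care with the choice of $r \in (\max\set{p,s},p^\ast)$, with the $b$-dependent $\delta$ in \eqref{107}, and with the constant $b/s$ in \eqref{109} is exactly what is needed, and no gap remains.
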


When $q < s < p$, we have the following corollary.

\begin{corollary} \label{Corollary 112}
If $q < s < p$, then there exists $\nu_0 > 0$ such that problem \eqref{116} has a nontrivial weak solution for all $\nu \in (0,\nu_0)$ and $b > 0$.
\end{corollary}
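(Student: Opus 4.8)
The plan is to derive Corollary~\ref{Corollary 112} as the particular case $q < s < p$ of Corollary~\ref{Corollary 111}, which in turn specializes Theorem~\ref{Theorem 109} to the model nonlinearity $f(x,t) = b\, |t|^{s-2}\, t$. The first step is a bookkeeping observation: the assumption $q < s < p$ is admissible for Theorem~\ref{Theorem 109} irrespective of how $N$ compares to $p^2$. Indeed, if $N \ge p^2$ then, since $p < p^\ast$, the inequalities $q < s < p$ imply $q < s < p^\ast$, which is case~({\em i}); and if $N < p^2$, then $q < s < p$ is exactly the first alternative listed in case~({\em ii}). Hence, once the structural hypotheses \eqref{102}, \eqref{103}, \eqref{107}, \eqref{109} have been verified for problem \eqref{116}, Theorem~\ref{Theorem 109} furnishes a $\nu_0 > 0$ for which \eqref{116} has a nontrivial weak solution whenever $\nu \in (0,\nu_0)$, which is the assertion of the corollary.

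The second step is to check those hypotheses for $f(x,t) = b\, |t|^{s-2}\, t$, whose primitive is $F(x,t) = (b/s)\, |t|^s$. Condition \eqref{102} is clear. For \eqref{103}, choose any $r \in (p,p^\ast)$; since $s - 1 < p - 1 < r - 1$ we have $b\, |t|^{s-1} \le b\, |t|^{r-1}$ for $|t| \ge 1$ and $b\, |t|^{s-1} \le b$ for $|t| \le 1$, so \eqref{103} holds with $a_1 = a_2 = b$. Condition \eqref{107} is where the hypothesis $s > q$ enters: for $|t| < \delta$ one has $|t|^s = |t|^{s-q}\, |t|^q \le \delta^{s-q}\, |t|^q$, so choosing $\delta > 0$ small enough that $(b/s)\, \delta^{s-q} \le \mu_1/q$ yields $F(x,t) \le (\mu_1/q)\, |t|^q \le (\lambda/p)\, |t|^p + (\mu_1/q)\, |t|^q$ for any $\lambda \in (0,\lambda_1)$. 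Finally, \eqref{109} holds on any ball $B_\rho(x_0) \subset \Omega$, because $F(x,t) = (b/s)\, t^s$ for $t \ge 0$, with the positive constant $b/s$ playing the role of the constant in \eqref{109}; since $b$ ranges over all of $(0,\infty)$ so does $b/s$, so no restriction on $b$ is introduced.

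Since the genuine analytic work---constructing a path that lowers the mountain-pass level $c$ below $c^\ast$ when $\nu$ is small---is carried out inside Theorem~\ref{Theorem 109}, there is no real obstacle at the level of this corollary. The only point that merits care is the quantifier over $b$: one should keep track, in the proof of Theorem~\ref{Theorem 109}, that the threshold $\nu_0$ can be taken independent of the constant $b$ in \eqref{109} (intuitively, a larger $b$ only makes it easier to push $c$ below $c^\ast$), so that the single $\nu_0$ in the statement serves simultaneously for every $b > 0$. Granting this, the reduction above completes the proof.
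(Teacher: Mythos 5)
Your reduction to Theorem~\ref{Theorem 109} (equivalently, to Corollary~\ref{Corollary 111}) is exactly the route the paper takes, and the case analysis is correct: when $N \ge p^2$ the range $q < s < p$ sits inside the range $q < s < p^\ast$ of case~$(i)$, and when $N < p^2$ it is literally the first alternative of case~$(ii)$. The verification of \eqref{102}, \eqref{103}, \eqref{107}, \eqref{109} for $f(x,t) = b\,|t|^{s-2}t$ is also fine, and you correctly isolate $s > q$ as the hypothesis that makes \eqref{107} available.

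The final paragraph, however, has the logic of the $b$-uniformity backwards. Your intuition that ``a larger $b$ only makes it easier to push $c$ below $c^\ast$'' is true, but it points at precisely the opposite difficulty: the problematic regime for a uniform $\nu_0$ would be $b \to 0$, where the gap $c^\ast - c_0$ in the proof of Theorem~\ref{Theorem 109} shrinks to zero, forcing $\nu_0 \to 0$ as well. A $\nu_0$ valid simultaneously for all $b > 0$ is therefore not what Theorem~\ref{Theorem 109} produces and is not what you should try to ``grant.'' The correct reading of Corollary~\ref{Corollary 112} (and of Corollary~\ref{Corollary 111}) is $\nu_0 = \nu_0(b)$: for each fixed $b > 0$, Theorem~\ref{Theorem 109} applied with $f = b\,|t|^{s-2}t$ yields a corresponding $\nu_0 > 0$. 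This is also what the subsequent remark is emphasizing when contrasting with Ho--Sim: the gain is that $b$ need not be small, not that the smallness threshold on $\nu$ is independent of $b$. With that reading, your reduction is complete and no uniformity claim is needed.
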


\begin{remark}
A rescaling of a result in Ho and Sim \cite{MR4131823} shows that problem \eqref{116} has a nontrivial solution when $q < s < p$ and $\nu, b > 0$ are sufficiently small. In contrast, Corollary \ref{Corollary 112} gives a nontrivial solution for all $b > 0$.
\end{remark}

\section{Preliminaries}

\subsection{A compactness result}

For $\nu \ge 0$, set
\[
E_\nu(u) = \int_\Omega \left(\frac{1}{p}\, |\nabla u|^p + \frac{\nu}{q}\, |\nabla u|^q - F(x,u) - \frac{1}{p^\ast}\, |u|^{p^\ast}\right) dx, \quad u \in W^{1,\,p}_0(\Omega).
\]
Our existence results will be based on the following proposition, which extends Gazzola and Ruf \cite[Lemma 1]{MR1441856} and Arioli and Gazzola \cite[Lemma 1]{MR1741848} to the $(p,q)$-Laplacian.

\begin{proposition} \label{Proposition 201}
Let $1 < q < p < N$ and assume \eqref{103}. If $0 < c < c^\ast$, then every {\em \PS{c}} sequence has a subsequence that converges weakly to a nontrivial critical point of $E_\nu$.
\end{proposition}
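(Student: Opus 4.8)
The plan is to follow the concentration--compactness strategy of Arioli and Gazzola \cite{MR1741848}, adapted to the $(p,q)$-Laplacian, in three steps: (a) boundedness of the sequence, (b) identification of its weak limit as a critical point of $E_\nu$, and (c) nontriviality of that limit. Let $\seq{u_j}$ be a \PS{c} sequence; I write $o(\norm{u_j})$ for a quantity bounded by $\eps_j\norm{u_j}$ with $\eps_j\to0$, which is legitimate since $E_\nu'(u_j)\to0$.

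\emph{(a) Boundedness.} Combining $E_\nu(u_j)\to c$ with $\dualp{E_\nu'(u_j)}{u_j}=o(\norm{u_j})$ and observing that the critical terms cancel,
\begin{equation*}
\tfrac1N\norm{u_j}^p+\nu\Bigl(\tfrac1q-\tfrac1{p^\ast}\Bigr)\dint_\Omega|\nabla u_j|^q\,dx+\dint_\Omega\Bigl(\tfrac1{p^\ast}f(x,u_j)u_j-F(x,u_j)\Bigr)dx=c+o(1)+o(\norm{u_j}),
\end{equation*}
and the $q$-term on the left is nonnegative since $q<p^\ast$. By \eqref{103} the last integral is $\ge-C\dint_\Omega|u_j|^r\,dx-C\norm{u_j}$, and since $r<p^\ast$ Young's inequality gives $|t|^r\le\eps|t|^{p^\ast}+C_\eps$; a second use of $\dualp{E_\nu'(u_j)}{u_j}=o(\norm{u_j})$ bounds $\dint_\Omega|u_j|^{p^\ast}\,dx$ by $C\norm{u_j}^p+C_\eps+o(\norm{u_j})$. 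Feeding this back and choosing $\eps$ small yields $\norm{u_j}^p\le C+o(\norm{u_j})$, so $\seq{u_j}$ is bounded because $p>1$.

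\emph{(b) The weak limit is a critical point --- the main obstacle.} Passing to a subsequence, $u_j\rightharpoonup u$ in $W^{1,\,p}_0(\Omega)$, $u_j\to u$ in $L^r(\Omega)$ and a.e., while $|\nabla u_j|^p\,dx$ and $|u_j|^{p^\ast}\,dx$ converge weakly-$\ast$ to finite measures $\mu$ and $\gamma$ on $\overline\Omega$; by Brezis--Lieb together with $u_j\to u$ a.e., $|u_j-u|^{p^\ast}\,dx$ converges weakly-$\ast$ to a measure carried by the at most countable set $\set{x_k}$ of atoms of $\gamma$, so $u_j\to u$ in $L^{p^\ast}$ on every ball whose closure avoids the $x_k$. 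The heart of the matter is to prove $\nabla u_j\to\nabla u$ a.e.\! in $\Omega$, which I would do locally: for a ball $B=B_\rho(x_0)$ with $\overline B\subset\Omega$ and $\overline B\cap\set{x_k}=\emptyset$ and a cutoff $\phi\in C_c^\infty(B)$ with $\phi\equiv1$ on $B_{\rho/2}(x_0)$, $0\le\phi\le1$, I test $E_\nu'(u_j)\to0$ against $(u_j-u)\phi$. As $j\to\infty$ the term $\dint|u_j|^{p^\ast-2}u_j(u_j-u)\phi\,dx$ vanishes by the local $L^{p^\ast}$-convergence, the term with $f$ vanishes by \eqref{103} and the compact embedding $W^{1,\,p}_0(\Omega)\hookrightarrow L^r(\Omega)$, and the ``gradient against $\nabla\phi$'' remainders --- for both $-\Delta_p$ and $-\Delta_q$ --- vanish by the compact embedding into $L^p(\Omega)$; dropping the nonnegative $q$-monotonicity term leaves
\begin{equation*}
\dint_{B_{\rho/2}(x_0)}\bigl(|\nabla u_j|^{p-2}\nabla u_j-|\nabla u|^{p-2}\nabla u\bigr)\cdot\nabla(u_j-u)\,dx\longrightarrow0,
\end{equation*}
whence $\nabla u_j\to\nabla u$ in $L^p(B_{\rho/2}(x_0))$ by the strong monotonicity inequalities for $-\Delta_p$ (used with the customary weight when $1<p<2$), hence a.e.\! along a subsequence. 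Covering $\Omega\setminus\set{x_k}$ by countably many such balls and extracting a diagonal subsequence gives $\nabla u_j\to\nabla u$ a.e.\! in $\Omega$; therefore $|\nabla u_j|^{p-2}\nabla u_j\rightharpoonup|\nabla u|^{p-2}\nabla u$ in $L^{p/(p-1)}(\Omega)$ and $|\nabla u_j|^{q-2}\nabla u_j\rightharpoonup|\nabla u|^{q-2}\nabla u$ in $L^{q/(q-1)}(\Omega)$, and passing to the limit in $\dualp{E_\nu'(u_j)}{v}\to0$ for each fixed $v\in W^{1,\,p}_0(\Omega)$ yields $E_\nu'(u)=0$. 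Making the nonlinear $(p,q)$-Laplacian pass to the limit in the absence of global compactness at the critical exponent is, I expect, the only real difficulty; the extra $q$-Laplacian forces a handful of additional but routine estimates.

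\emph{(c) Nontriviality.} With $\nabla u_j\to\nabla u$ a.e.\! and $\seq{u_j}$ bounded, the Brezis--Lieb lemma splits $\norm{u_j}^p$, $\dint_\Omega|\nabla u_j|^q\,dx$ and $\dint_\Omega|u_j|^{p^\ast}\,dx$ into the values at $u$ plus the values at $u_j-u$, up to $o(1)$, while $\dint_\Omega F(x,u_j)\,dx\to\dint_\Omega F(x,u)\,dx$ and $\dint_\Omega f(x,u_j)u_j\,dx\to\dint_\Omega f(x,u)u\,dx$. Writing $a=\lim\norm{u_j-u}^p$, $b=\nu\lim\dint_\Omega|\nabla(u_j-u)|^q\,dx$ and $d=\lim\dint_\Omega|u_j-u|^{p^\ast}\,dx$ along a further subsequence, $E_\nu'(u)=0$ with $\dualp{E_\nu'(u_j)}{u_j}\to0$ gives $a+b=d$, and $E_\nu(u_j)\to c$ gives $\tfrac1N a+\bigl(\tfrac1q-\tfrac1{p^\ast}\bigr)b=c-E_\nu(u)$. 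Since $\bigl(\dint_\Omega|u_j-u|^{p^\ast}\,dx\bigr)^{p/p^\ast}\le S^{-1}\norm{u_j-u}^p$ by \eqref{105}, we get $d^{p/p^\ast}\le a/S$, so $a\le S^{-p^\ast/p}a^{p^\ast/p}$, i.e.\! $a=0$ or $a\ge S^{N/p}$. If $a=0$ then $b=d=0$, so $u_j\to u$ in $W^{1,\,p}_0(\Omega)$ and $c=E_\nu(u)$; since $c>0=E_\nu(0)$ this forces $u\ne0$. If $a\ge S^{N/p}$ and we had $u=0$, then $E_\nu(u)=E_\nu(0)=0$ and $c=\tfrac1N a+\bigl(\tfrac1q-\tfrac1{p^\ast}\bigr)b\ge\tfrac1N S^{N/p}=c^\ast$, contradicting $c<c^\ast$; so again $u\ne0$. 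In either case $u$ is a nontrivial critical point of $E_\nu$.
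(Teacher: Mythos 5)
Your argument is correct, but it is more elaborate than the paper's at each of its three stages, and the differences are worth recording. For boundedness you subtract $(1/p^\ast)\dualp{E_\nu'(u_j)}{u_j}$ from $E_\nu(u_j)$, which makes the critical terms cancel but then forces a feedback estimate on $\int_\Omega|u_j|^{p^\ast}\,dx$; the paper instead subtracts $(1/\sigma)\dualp{E_\nu'(u_j)}{u_j}$ with $\sigma\in(p,p^\ast)$, so the coefficient $1/\sigma-1/p^\ast$ of $|u_j|^{p^\ast}$ is strictly positive and absorbs the subcritical $F$ and $f(x,u_j)u_j$ terms after one application of Young's inequality, with no feedback loop. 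For identifying the weak limit as a critical point you reprove, via concentration--compactness and localized strict monotonicity of the $(p,q)$-Laplacian, that $\nabla u_j\to\nabla u$ a.e.\! and hence that $E_\nu'(u)=0$; the paper simply invokes \cite[Lemma 2.3]{MR2509998} for the weak continuity of $E_\nu'$, so your step (b) is essentially an unwinding of that cited lemma. For nontriviality you run the full Brezis--Lieb splitting of $\norm{u_j}^p$, $\int_\Omega|\nabla u_j|^q\,dx$ and $\int_\Omega|u_j|^{p^\ast}\,dx$ and deduce the dichotomy $a=0$ or $a\ge S^{N/p}$, which requires the a.e.\! gradient convergence imported from step (b). The paper's argument is both shorter and logically independent of (b): assuming $u=0$, the compact embedding into $L^r(\Omega)$ already gives $\int_\Omega F(x,u_j)\,dx\to0$ and $\int_\Omega f(x,u_j)\,u_j\,dx\to0$, so \eqref{201} and \eqref{203} collapse to \eqref{204} and \eqref{205}, whence $\norm{u_j}^p\ge S^{N/p}+\o(1)$ and $c\ge c^\ast$ follow in a few lines with no gradient-convergence input at all. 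Your version has the merit of producing along the way the Brezis--Lieb decomposition of a \PS{c} sequence, which can be useful elsewhere, but the paper's route is more economical and keeps the critical-point identification and the nontriviality argument cleanly decoupled.
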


\begin{proof}
Let $\seq{u_j} \subset W^{1,\,p}_0(\Omega)$ be a \PS{c} sequence, i.e.,
\begin{equation} \label{201}
E_\nu(u_j) = \int_\Omega \left(\frac{1}{p}\, |\nabla u_j|^p + \frac{\nu}{q}\, |\nabla u_j|^q - F(x,u_j) - \frac{1}{p^\ast}\, |u_j|^{p^\ast}\right) dx = c + \o(1)
\end{equation}
and
\begin{multline} \label{202}
\dualp{E_\nu'(u_j)}{v} = \int_\Omega \big(|\nabla u_j|^{p-2}\, \nabla u_j \cdot \nabla v + \nu\, |\nabla u_j|^{q-2}\, \nabla u_j \cdot \nabla v - f(x,u_j)\, v\\[7.5pt]
- |u_j|^{p^\ast - 2}\, uv\big)\, dx = \o(\norm{v}) \quad \forall v \in W^{1,\,p}_0(\Omega).
\end{multline}
Taking $v = u_j$ in \eqref{202} gives
\begin{equation} \label{203}
\int_\Omega \left(|\nabla u_j|^p + \nu\, |\nabla u_j|^q - f(x,u_j)\, u_j - |u_j|^{p^\ast}\right) dx = \o(\norm{u_j}).
\end{equation}
Fix $\sigma \in (p,p^\ast)$. Dividing \eqref{203} by $\sigma$ and subtracting from \eqref{201} gives
\begin{multline*}
\left(\frac{1}{p} - \frac{1}{\sigma}\right) \int_\Omega |\nabla u_j|^p\, dx + \left(\frac{1}{q} - \frac{1}{\sigma}\right) \nu \int_\Omega |\nabla u_j|^q\, dx\\[7.5pt]
+ \int_\Omega \left[\left(\frac{1}{\sigma} - \frac{1}{p^\ast}\right) |u_j|^{p^\ast} - F(x,u_j) + \frac{1}{\sigma}\, f(x,u_j)\, u_j\right] dx = c + \o(1) + \o(\norm{u_j}).
\end{multline*}
Since $q < p < \sigma < p^\ast$, it follows from this and \eqref{103} that $\seq{u_j}$ is bounded in $W^{1,\,p}_0(\Omega)$. So a renamed subsequence converges to some $u$ weakly in $W^{1,\,p}_0(\Omega)$, strongly in $L^r(\Omega)$, and a.e.\! in $\Omega$. Then $u$ is a critical point of $E_\nu$ by the weak continuity of $E_\nu'$ (see Li and Zhang \cite[Lemma 2.3]{MR2509998}).

Suppose $u = 0$. Then \eqref{201} and \eqref{203} reduce to
\begin{equation} \label{204}
\int_\Omega \left(\frac{1}{p}\, |\nabla u_j|^p + \frac{\nu}{q}\, |\nabla u_j|^q - \frac{1}{p^\ast}\, |u_j|^{p^\ast}\right) dx = c + \o(1)
\end{equation}
and
\begin{equation} \label{205}
\int_\Omega \left(|\nabla u_j|^p + \nu\, |\nabla u_j|^q - |u_j|^{p^\ast}\right) dx = \o(1),
\end{equation}
respectively. Equation \eqref{205} together with \eqref{105} gives
\begin{equation} \label{206}
\norm{u_j}^p \le \frac{\norm{u_j}^{p^\ast}}{S^{p^\ast/p}} + \o(1).
\end{equation}
If $\norm{u_j} \to 0$ for a renamed subsequence, then \eqref{204} gives $c = 0$, contrary to our assumption that $c > 0$. So $\norm{u_j}$ is bounded away from zero and hence \eqref{206} implies that
\[
\norm{u_j}^p \ge S^{N/p} + \o(1).
\]
Now dividing \eqref{205} by $p^\ast$ and subtracting from \eqref{204} gives
\[
c = \int_\Omega \left[\left(\frac{1}{p} - \frac{1}{p^\ast}\right) |\nabla u_j|^p + \left(\frac{1}{q} - \frac{1}{p^\ast}\right) \nu |\nabla u_j|^q\right] dx + \o(1) \ge \frac{1}{N}\, S^{N/p} + \o(1),
\]
so $c \ge c^\ast$, contrary to assumption.
\end{proof}

\subsection{Some estimates}

Let $\rho > 0$ be as in \eqref{109}, take a cut-off function $\psi \in C^\infty_0(B_\rho(0))$ such that $0 \le \psi \le 1$ and $\psi = 1$ on $B_{\rho/2}(0)$, and set
\[
u_\eps(x) = \frac{\psi(x)}{\left(\eps^{p/(p-1)} + |x|^{p/(p-1)}\right)^{(N-p)/p}}, \qquad v_\eps(x) = \frac{u_\eps(x)}{\pnorm[p^\ast]{u_\eps}}
\]
for $\eps > 0$, where $\pnorm[p^\ast]{\cdot}$ denotes the norm in $L^{p^\ast}(\Omega)$. Then $\pnorm[p^\ast]{v_\eps} = 1$. Recall that
\[
f(\eps) = \Theta(g(\eps))
\]
as $\eps \to 0$ if there exist constants $c, C > 0$ such that
\[
c\, |g(\eps)| \le |f(\eps)| \le C\, |g(\eps)|
\]
for all sufficiently small $\eps > 0$. We have the estimates
\begin{equation} \label{207}
\int_{\R^N} |\nabla v_\eps|^p\, dx = S + \Theta\big(\eps^{(N-p)/(p-1)}\big),
\end{equation}
where $S$ is as in \eqref{105},
\begin{equation} \label{208}
\int_{\R^N} |\nabla v_\eps|^q\, dx = \begin{cases}
\Theta\big(\eps^{N(p-q)/p}\big), & q > \frac{N(p - 1)}{N - 1}\\[7.5pt]
\Theta\big(\eps^{N(N-p)/(N-1)p}\, |\log \eps|\big), & q = \frac{N(p - 1)}{N - 1}\\[7.5pt]
\Theta\big(\eps^{(N-p)q/p(p-1)}\big), & q < \frac{N(p - 1)}{N - 1},
\end{cases}
\end{equation}
and
\begin{equation} \label{209}
\int_{\R^N} v_\eps^s\, dx = \begin{cases}
\Theta\big(\eps^{[Np-(N-p)s]/p}\big), & s > \frac{N(p - 1)}{N - p}\\[7.5pt]
\Theta\big(\eps^{N/p}\, |\log \eps|\big), & s = \frac{N(p - 1)}{N - p}\\[7.5pt]
\Theta\big(\eps^{(N-p)s/p(p-1)}\big), & s < \frac{N(p - 1)}{N - p}
\end{cases}
\end{equation}
as $\eps \to 0$ (see Dr{\'a}bek and Huang \cite{MR1473856}).

For $\eps > 0$ and $0 < \delta \le 1$, set
\[
u_{\eps,\delta}(x) = \frac{\psi(x/\delta)}{\left(\eps^{p/(p-1)} + |x|^{p/(p-1)}\right)^{(N-p)/p}}, \qquad v_{\eps,\delta}(x) = \frac{u_{\eps,\delta}(x)}{\pnorm[p^\ast]{u_{\eps,\delta}}}.
\]
Then $\pnorm[p^\ast]{v_{\eps,\delta}} = 1$ and we will derive estimates similar to \eqref{207}--\eqref{209} for $v_{\eps,\delta}$. First we note that
\begin{equation} \label{210}
u_{\eps,\delta}(x) = \delta^{-(N-p)/(p-1)}\, u_{\eps/\delta}(x/\delta).
\end{equation}
So
\[
\int_{\R^N} u_{\eps,\delta}^{p^\ast}\, dx = \delta^{-Np/(p-1)} \int_{\R^N} u_{\eps/\delta}(x/\delta)^{p^\ast} dx = \delta^{-N/(p-1)} \int_{\R^N} u_{\eps/\delta}^{p^\ast}\, dx
\]
and hence
\begin{equation} \label{211}
\pnorm[p^\ast]{u_{\eps,\delta}} = \delta^{-(N-p)/p(p-1)} \pnorm[p^\ast]{u_{\eps/\delta}}.
\end{equation}
It follows from \eqref{210} and \eqref{211} that
\[
v_{\eps,\delta}(x) = \delta^{-(N-p)/p}\, v_{\eps/\delta}(x/\delta).
\]
So
\begin{equation} \label{212}
\int_{\R^N} v_{\eps,\delta}^s\, dx = \delta^{-(N-p)s/p} \int_{\R^N} v_{\eps/\delta}(x/\delta)^s\, dx = \delta^{[Np-(N-p)s]/p} \int_{\R^N} v_{\eps/\delta}^s\, dx.
\end{equation}
Moreover,
\[
\nabla v_{\eps,\delta}(x) = \delta^{-N/p}\, \nabla v_{\eps/\delta}(x/\delta)
\]
and hence
\begin{equation} \label{213}
\int_{\R^N} |\nabla v_{\eps,\delta}|^q\, dx = \delta^{-Nq/p} \int_{\R^N} |\nabla v_{\eps/\delta}(x/\delta)|^q\, dx = \delta^{N(p-q)/p} \int_{\R^N} |\nabla v_{\eps/\delta}|^q\, dx.
\end{equation}
Combining \eqref{212} and \eqref{213} with \eqref{207}--\eqref{209} gives us the following estimates.

\begin{lemma} \label{Lemma 202}
As $\eps \to 0$ and $\eps/\delta \to 0$,
\begin{equation} \label{214}
\int_{\R^N} |\nabla v_{\eps,\delta}|^p\, dx = S + \Theta\big((\eps/\delta)^{(N-p)/(p-1)}\big),
\end{equation}
\begin{equation} \label{215}
\int_{\R^N} |\nabla v_{\eps,\delta}|^q\, dx = \begin{cases}
\Theta\big(\eps^{N(p-q)/p}\big), & q > \frac{N(p - 1)}{N - 1}\\[7.5pt]
\Theta\big(\eps^{N(N-p)/(N-1)p}\, |\log\, (\eps/\delta)|\big), & q = \frac{N(p - 1)}{N - 1}\\[7.5pt]
\Theta\big(\eps^{(N-p)q/p(p-1)}\, \delta^{[N(p-1)-(N-1)q]/(p-1)}\big), & q < \frac{N(p - 1)}{N - 1},
\end{cases}
\end{equation}
and
\begin{equation} \label{216}
\int_{\R^N} v_{\eps,\delta}^s\, dx = \begin{cases}
\Theta\big(\eps^{[Np-(N-p)s]/p}\big), & s > \frac{N(p - 1)}{N - p}\\[7.5pt]
\Theta\big(\eps^{N/p}\, |\log\, (\eps/\delta)|\big), & s = \frac{N(p - 1)}{N - p}\\[7.5pt]
\Theta\big(\eps^{(N-p)s/p(p-1)}\, \delta^{[N(p-1)-(N-p)s]/(p-1)}\big), & s < \frac{N(p - 1)}{N - p}.
\end{cases}
\end{equation}
\end{lemma}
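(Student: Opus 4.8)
The plan is to obtain all three estimates by reducing them to the already-established estimates \eqref{207}--\eqref{209} for $v_\eps$ through the scaling identities recorded above. The $L^s$-norm is governed by \eqref{212} and the $q$-gradient by \eqref{213}; for the $p$-gradient, the relation $\nabla v_{\eps,\delta}(x) = \delta^{-N/p}\, \nabla v_{\eps/\delta}(x/\delta)$ together with the change of variables $y = x/\delta$ gives the scale-invariant identity
\[
\int_{\R^N} |\nabla v_{\eps,\delta}|^p\, dx = \int_{\R^N} |\nabla v_{\eps/\delta}|^p\, dx,
\]
which is precisely why no power of $\delta$ appears in \eqref{214}.

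For \eqref{214} I would simply apply \eqref{207} with $\eps$ replaced by $\eps/\delta$; this is legitimate since $\eps/\delta \to 0$ by hypothesis, and it yields $S + \Theta\big((\eps/\delta)^{(N-p)/(p-1)}\big)$ at once. For \eqref{215} I would substitute \eqref{208} (again with $\eps$ replaced by $\eps/\delta$) into \eqref{213} and split into the three cases according to the sign of $q - N(p-1)/(N-1)$. When $q > N(p-1)/(N-1)$ the factor $\delta^{N(p-q)/p}$ cancels the power of $\delta$ produced by the substitution in \eqref{208}, leaving $\Theta\big(\eps^{N(p-q)/p}\big)$; in the borderline case one first checks the identity $N(p-q)/p = N(N-p)/(N-1)p$, so the powers of $\delta$ cancel again except inside the logarithm, whose argument is $\eps/\delta$; and when $q < N(p-1)/(N-1)$ the net exponent of $\delta$ is $N(p-q)/p - (N-p)q/p(p-1)$, which a short computation simplifies to $[N(p-1)-(N-1)q]/(p-1)$. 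Estimate \eqref{216} is handled in exactly the same way, feeding \eqref{209} into \eqref{212}: the powers of $\delta$ cancel when $s > N(p-1)/(N-p)$, the borderline identity needed is $[Np-(N-p)s]/p = N/p$, and in the remaining case the surviving power of $\delta$ collapses to $[N(p-1)-(N-p)s]/(p-1)$.

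This argument involves no analytic subtlety; the only point requiring care --- and the closest thing to an obstacle --- is the exponent bookkeeping in the two borderline cases and the verification that the composite powers of $\delta$ reduce to the stated expressions, which is elementary algebra. For completeness I would also note that $\psi(\cdot/\delta)$ is supported in $B_{\rho\delta}(0) \subseteq B_\rho(0)$ for $0 < \delta \le 1$, so $v_{\eps,\delta}$ is well defined and the reduction to the dilated function $v_{\eps/\delta}$, which carries the fixed cut-off $\psi$, is valid with all $\Theta$-constants independent of $\eps$ and $\delta$.
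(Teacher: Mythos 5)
Your proposal is correct and matches the paper's argument exactly: the paper derives the scaling identities \eqref{212}--\eqref{213} (and implicitly the $\delta$-invariance of the $p$-gradient integral) and then states that combining these with \eqref{207}--\eqref{209} (applied at scale $\eps/\delta$) yields the lemma. You have merely written out the exponent bookkeeping that the paper leaves to the reader, and your algebra in all three cases, including the two borderline identities, is correct.
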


Next we prove the following proposition.

\begin{proposition} \label{Proposition 203}
If $\seq{\eps_j}, \seq{\delta_j}$ are sequences such that $\eps_j \to 0,\, 0 < \delta_j \le 1,\, \eps_j/\delta_j \to 0$,
\begin{equation} \label{217}
\frac{\nu \dint_{\R^N} |\nabla v_{\eps_j,\delta_j}|^q\, dx}{\dint_{\R^N} v_{\eps_j,\delta_j}^s\, dx} \to 0, \qquad \frac{(\eps_j/\delta_j)^{(N-p)/(p-1)}}{\dint_{\R^N} v_{\eps_j,\delta_j}^s\, dx} \to 0,
\end{equation}
then
\[
\max_{t \ge 0}\, E_\nu(t v_{\eps_j,\delta_j}(x - x_0)) < c^\ast
\]
for all sufficiently large $j$.
\end{proposition}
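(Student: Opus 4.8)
The plan is to estimate $\sup_{t \ge 0} E_\nu(t v_{\eps_j, \delta_j})$ (writing $v_j := v_{\eps_j,\delta_j}(\cdot - x_0)$, supported in $B_\rho(x_0) \subset \Omega$) from above by the critical value $c^\ast = \frac 1N S^{N/p}$, plus controlled error terms. First I would discard the lower-order term $F(x,u)$ from $E_\nu$: by \eqref{109}, $F(x, tv_j) \ge b\, t^s v_j^s$ on $B_\rho(x_0)$ for $t \ge 0$, hence
\[
E_\nu(t v_j) \le \frac{t^p}{p} \int_{\R^N} |\nabla v_j|^p\, dx + \frac{\nu\, t^q}{q} \int_{\R^N} |\nabla v_j|^q\, dx - b\, t^s \int_{\R^N} v_j^s\, dx - \frac{t^{p^\ast}}{p^\ast}
\]
(using $\pnorm[p^\ast]{v_j} = 1$). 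Abbreviate $A_j = \int |\nabla v_j|^p\,dx = S + \Theta((\eps_j/\delta_j)^{(N-p)/(p-1)})$, $B_j = \nu \int |\nabla v_j|^q\,dx$, $C_j = b \int v_j^s\,dx$, so the right side is $\varphi_j(t) := \frac{t^p}{p} A_j + \frac{t^q}{q} B_j - t^s C_j - \frac{t^{p^\ast}}{p^\ast}$. Since $\varphi_j(0) = 0$ and $\varphi_j(t) \to -\infty$, its supremum is attained at some $t_j > 0$; moreover $t_j$ stays in a compact subset of $(0,\infty)$ away from $0$ and $\infty$, because $A_j \to S > 0$ while $B_j, C_j \to 0$, so $\varphi_j$ converges uniformly on compacts to $\frac{t^p}{p} S - \frac{t^{p^\ast}}{p^\ast}$, whose positive maximizer is $t_0 = S^{(N-p)/p^2}$ with maximum value exactly $c^\ast$.

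The core estimate is then: drop the negative term $-t^s C_j$ and the positive term $\frac{t^q}{q}B_j$ separately is not enough, so instead I would keep both. Using $\frac{t^p}{p} A_j - \frac{t^{p^\ast}}{p^\ast} \le \max_{t\ge0}(\frac{t^p}{p} A_j - \frac{t^{p^\ast}}{p^\ast}) = \frac 1N A_j^{N/p}$, and $A_j^{N/p} = (S + \Theta((\eps_j/\delta_j)^{(N-p)/(p-1)}))^{N/p} = S^{N/p} + \Theta((\eps_j/\delta_j)^{(N-p)/(p-1)})$ (since $N/p > 1$ and the perturbation $\to 0$), we get
\[
\sup_{t \ge 0} E_\nu(t v_j) \le c^\ast + K_1 (\eps_j/\delta_j)^{(N-p)/(p-1)} + \frac{t_j^q}{q} B_j - t_j^s C_j
\]
for a constant $K_1 > 0$. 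Since $t_j$ lies in a fixed compact interval $[\alpha, \beta] \subset (0,\infty)$ for large $j$, we have $\frac{t_j^q}{q} B_j \le \frac{\beta^q}{q} B_j = \Theta(B_j)$ and $-t_j^s C_j \le -\alpha^s C_j$, but this last sign is useless — the point is simply that $C_j > 0$, so $-t_j^s C_j \le 0$ and can be dropped. Thus
\[
\sup_{t \ge 0} E_\nu(t v_j) \le c^\ast + K_1 (\eps_j/\delta_j)^{(N-p)/(p-1)} + K_2\, \nu \int_{\R^N} |\nabla v_j|^q\, dx.
\]
Now dividing and multiplying the last two error terms by $\int v_j^s\,dx = C_j/b > 0$, the hypothesis \eqref{217} says precisely that each of $(\eps_j/\delta_j)^{(N-p)/(p-1)} / \int v_j^s\,dx$ and $\nu \int |\nabla v_j|^q\,dx / \int v_j^s\,dx$ tends to $0$, hence
\[
K_1 (\eps_j/\delta_j)^{(N-p)/(p-1)} + K_2\, \nu \int_{\R^N} |\nabla v_j|^q\, dx = \o\Big(\int_{\R^N} v_j^s\, dx\Big) < b \int_{\R^N} v_j^s\, dx
\]
for large $j$ — but wait, I have already dropped the $-t_j^s C_j$ term, so I need to be more careful: instead of dropping it, keep $-\alpha^s b \int v_j^s\,dx$ with $\alpha > 0$ a lower bound for $t_j$, and absorb the two positive errors into it using \eqref{217}. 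Concretely, for large $j$ the two positive error terms are together $\le \frac{\alpha^s b}{2}\int v_j^s\,dx \le \frac{t_j^s b}{2}\int v_j^s\,dx \le t_j^s C_j$, which yields $\sup_{t\ge0} E_\nu(tv_j) < c^\ast$.

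The main obstacle — and the reason the two-part hypothesis \eqref{217} is stated the way it is — is controlling $t_j$ away from $0$: I need a uniform lower bound $t_j \ge \alpha > 0$ so that $t_j^s \int v_j^s\,dx$ genuinely dominates, not just formally. This follows from the uniform-on-compacts convergence $\varphi_j \to \frac{t^p}p S - \frac{t^{p^\ast}}{p^\ast}$ together with $\varphi_j(t_j) = \sup \varphi_j \ge \varphi_j(t_0) \to c^\ast > 0$, which forces $t_j$ into a fixed compact subinterval of $(0,\infty)$; I would make this rigorous by noting $\varphi_j(t) \le \frac{t^p}{p}(A_j + B_j) \to \frac{t^p}{p}S$ is small for small $t$ uniformly in $j$, and $\varphi_j(t) \le \frac{t^p}{p}(A_j+B_j) - \frac{t^{p^\ast}}{p^\ast} \to -\infty$ as $t\to\infty$ uniformly in $j$. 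A secondary point to check is that the scaling identities \eqref{212}--\eqref{213} and Lemma \ref{Lemma 202} are exactly what make the error exponents match up so that \eqref{217} is achievable for suitable $\eps_j, \delta_j$ — but that is the content of the later proof of Theorem \ref{Theorem 101}, not of this proposition, so here I simply invoke \eqref{217} as given.
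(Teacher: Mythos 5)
Your proof is correct and takes a genuinely different route from the paper. The paper argues by contradiction: assuming the maximizer $t_j$ satisfies $\varphi(t_j) \ge c^\ast$, it applies the mean value theorem to the stationarity equation $\varphi'(t_j)=0$ (after subtracting $St_0^p - t_0^{p^\ast}=0$) to deduce the sign information $t_j < t_0$, and then combines the energy and stationarity equations to reach the contradiction $b(1-s/p^\ast)\,t_0^s \le 0$. You instead argue directly: you bound the $p$-gradient and $p^\ast$ terms together via the elementary identity $\max_{t\ge 0}\bigl(\tfrac{t^p}{p}A_j - \tfrac{t^{p^\ast}}{p^\ast}\bigr) = \tfrac{1}{N}A_j^{N/p} = c^\ast + \Theta\bigl((\eps_j/\delta_j)^{(N-p)/(p-1)}\bigr)$, keep only the two remaining error terms evaluated at $t_j$, and absorb both positive errors into the negative $-t_j^s C_j$ term using \eqref{217} together with the uniform bounds $\alpha \le t_j \le \beta$. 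This is the classical Brezis--Nirenberg-style argument and avoids both the contradiction and the mean value theorem; the paper's version instead tracks the perturbation of $t_j$ around $t_0$ more precisely, which buys nothing extra here but is perhaps more natural if one wished to extract quantitative information about $t_j - t_0$. One small slip in your writeup: the bound $\varphi_j(t) \le \tfrac{t^p}{p}(A_j + B_j)$ invoked for the uniform lower bound on $t_j$ is false when $t < 1$, since $t^q > t^p$ there; what is actually true, and suffices, is $\varphi_j(t) \le \tfrac{t^p}{p}A_j + \tfrac{1}{q}B_j$ for $t \le 1$, whose second term tends to $0$, so $\sup_{0 \le t \le \tau}\varphi_j(t)$ is eventually below the fixed positive number $\lim_j \varphi_j(t_0)$ once $\tau$ is small, forcing $t_j \ge \alpha$. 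With that fix the argument is complete.
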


\begin{proof}
Write $v_j(x) = v_{\eps_j,\delta_j}(x - x_0)$. Since $v_j(x) = 0$ for all $x \in \Omega \setminus B_\rho(x_0)$,
\[
F(x,t v_j(x)) \ge bt^s v_j(x)^s \quad \text{for a.a.\! } x \in \Omega \text{ and all } t \ge 0
\]
by \eqref{109}, so
\[
E_\nu(t v_j) \le \frac{t^p}{p} \int_\Omega |\nabla v_j|^p\, dx + \frac{\nu t^q}{q} \int_\Omega |\nabla v_j|^q\, dx - bt^s \int_\Omega v_j^s\, dx - \frac{t^{p^\ast}}{p^\ast} =: \varphi(t).
\]
Suppose that the conclusion of the lemma is false. Then there are renamed subsequences $\seq{\eps_j}, \seq{\delta_j}$ and $t_j > 0$ such that
\begin{equation} \label{218}
\varphi(t_j) = \frac{t_j^p}{p} \int_\Omega |\nabla v_j|^p\, dx + \frac{\nu t_j^q}{q} \int_\Omega |\nabla v_j|^q\, dx - b t_j^s \int_\Omega v_j^s\, dx - \frac{t_j^{p^\ast}}{p^\ast} \ge c^\ast
\end{equation}
and
\begin{equation} \label{219}
t_j\, \varphi'(t_j) = t_j^p \int_\Omega |\nabla v_j|^p\, dx + \nu t_j^q \int_\Omega |\nabla v_j|^q\, dx - sb t_j^s \int_\Omega v_j^s\, dx - t_j^{p^\ast} = 0.
\end{equation}
By Lemma \ref{Lemma 202},
\[
\int_\Omega |\nabla v_j|^p\, dx \to S, \qquad \int_\Omega |\nabla v_j|^q\, dx \to 0, \qquad \int_\Omega v_j^s\, dx \to 0.
\]
So \eqref{218} implies that the sequence $\seq{t_j}$ is bounded and hence converges to some $t_0 > 0$ for a subsequence. Passing to the limit in \eqref{219} gives
\begin{equation} \label{220}
S t_0^p - t_0^{p^\ast} = 0,
\end{equation}
so $t_0 = S^{(N-p)/p^2}$.

Subtracting \eqref{220} from \eqref{219} and using \eqref{214} gives
\[
S \big(t_j^p - t_0^p\big) + \nu t_j^q \int_\Omega |\nabla v_j|^q\, dx - sb t_j^s \int_\Omega v_j^s\, dx - \big(t_j^{p^\ast} - t_0^{p^\ast}\big) = \Theta\big((\eps_j/\delta_j)^{(N-p)/(p-1)}\big).
\]
Then
\begin{equation} \label{221}
\left(p\, S \sigma_j^{p-1} - p^\ast \tau_j^{p^\ast - 1}\right) (t_j - t_0) = sb t_j^s \int_\Omega v_j^s\, dx - \nu t_j^q \int_\Omega |\nabla v_j|^q\, dx + \Theta\big((\eps_j/\delta_j)^{(N-p)/(p-1)}\big)
\end{equation}
for some $\sigma_j$ and $\tau_j$ between $t_0$ and $t_j$ by the mean value theorem. Since $t_j \to t_0$, $\sigma_j, \tau_j \to t_0$ and hence
\[
p\, S \sigma_j^{p-1} - p^\ast \tau_j^{p^\ast - 1} \to p\, S t_0^{p-1} - p^\ast t_0^{p^\ast - 1} = - (p^\ast - p)\, t_0^{p^\ast - 1}
\]
by \eqref{220}. So \eqref{221} together with \eqref{217} gives
\[
t_j = t_0 - \left(\frac{sb t_0^{- (p^\ast - s - 1)}}{p^\ast - p} + \o(1)\right) \int_\Omega v_j^s\, dx < t_0
\]
for all sufficiently large $j$.

Dividing \eqref{219} by $p^\ast$, subtracting from \eqref{218}, using \eqref{214}, and writing $c^\ast$ in terms of $t_0$ gives
\[
\frac{1}{N}\, S t_j^p + \left(\frac{1}{q} - \frac{1}{p^\ast}\right) \nu t_j^q \int_\Omega |\nabla v_j|^q\, dx - b \left(1 - \frac{s}{p^\ast}\right) t_j^s \int_\Omega v_j^s\, dx \ge \frac{1}{N}\, S t_0^p + \Theta\big((\eps_j/\delta_j)^{(N-p)/(p-1)}\big).
\]
This together with $t_j < t_0$ and \eqref{217} gives
\[
b \left(1 - \frac{s}{p^\ast}\right) t_0^s \le 0,
\]
a contradiction since $s < p^\ast$ and $t_0 > 0$.
\end{proof}

\section{Proofs}

\subsection{Proof of Theorem \ref{Theorem 101}}

Lemma \ref{Lemma 202} gives the following estimates for the quotients in \eqref{217}.

\begin{lemma} \label{Lemma 301}
If $s > N(p - 1)/(N - p)$, then
\[
\frac{\dint_{\R^N} |\nabla v_{\eps_j,\delta_j}|^q\, dx}{\dint_{\R^N} v_{\eps_j,\delta_j}^s\, dx} = \begin{cases}
\Theta\big(\eps_j^{[(N-p)s-Nq]/p}\big), & q > \frac{N(p - 1)}{N - 1}\\[7.5pt]
\Theta\big(\eps_j^{[(N-p)s-Nq]/p}\, |\log\, (\eps_j/\delta_j)|\big), & q = \frac{N(p - 1)}{N - 1}\\[7.5pt]
\Theta\big(\eps_j^{[(N-p)(s+q/(p-1))-Np]/p}\, \delta_j^{[N(p-1)-(N-1)q]/(p-1)}\big), & q < \frac{N(p - 1)}{N - 1}
\end{cases}
\]
and
\[
\frac{(\eps_j/\delta_j)^{(N-p)/(p-1)}}{\dint_{\R^N} v_{\eps_j,\delta_j}^s\, dx} = \Theta\big(\eps_j^{[(N-p)(p-1)s-(Np-2N+p)p]/p(p-1)}\, \delta_j^{-(N-p)/(p-1)}\big).
\]
\end{lemma}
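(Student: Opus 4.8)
The proof is a direct computation from Lemma \ref{Lemma 202}, with no analytic content beyond it. The plan is to substitute the estimates \eqref{214}--\eqref{216} (which hold as $\eps_j \to 0$ and $\eps_j/\delta_j \to 0$) into the two quotients and simplify the resulting powers of $\eps_j$ and $\delta_j$, using the elementary fact that a ratio of positive quantities of orders $\Theta(g)$ and $\Theta(k)$, with $k$ eventually positive, has order $\Theta(g/k)$.

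First, since $s > N(p - 1)/(N - p)$, only the first alternative in \eqref{216} is relevant, so
\[
\int_{\R^N} v_{\eps_j,\delta_j}^s\, dx = \Theta\big(\eps_j^{[Np - (N - p)s]/p}\big).
\]
Dividing each of the three cases of \eqref{215} by this yields the three cases of the first claimed estimate. When $q > N(p - 1)/(N - 1)$, the exponent of $\eps_j$ is $[N(p - q) - (Np - (N - p)s)]/p = [(N - p)s - Nq]/p$. When $q = N(p - 1)/(N - 1)$, the exponent of $\eps_j$ is $[N(N - p)/(N - 1) - (Np - (N - p)s)]/p$, which again equals $[(N - p)s - Nq]/p$ because $N(N - p)/(N - 1) - Np = - N^2(p - 1)/(N - 1) = - Nq$, and the factor $|\log (\eps_j/\delta_j)|$ from \eqref{215} is carried along. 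When $q < N(p - 1)/(N - 1)$, the exponent of $\eps_j$ is $[(N - p)q/(p - 1) - (Np - (N - p)s)]/p = [(N - p)(s + q/(p - 1)) - Np]/p$, with the factor $\delta_j^{[N(p - 1) - (N - 1)q]/(p - 1)}$ unchanged.

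For the second quotient, write $(\eps_j/\delta_j)^{(N - p)/(p - 1)} = \eps_j^{(N - p)/(p - 1)}\, \delta_j^{- (N - p)/(p - 1)}$ and divide by the displayed order of $\int_{\R^N} v_{\eps_j,\delta_j}^s\, dx$. The factor $\delta_j^{- (N - p)/(p - 1)}$ is carried along, and the exponent of $\eps_j$, brought over the common denominator $p(p - 1)$, is $[p(N - p) - (p - 1)(Np - (N - p)s)]/p(p - 1)$; expanding its constant part as $p(N - p) - (p - 1)Np = - p(Np - 2N + p)$ gives the claimed exponent $[(N - p)(p - 1)s - (Np - 2N + p)p]/p(p - 1)$.

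The only step needing attention is the bookkeeping of the exponents — in particular the two identities $N(N - p)/(N - 1) - Np = - Nq$, valid precisely when $q = N(p - 1)/(N - 1)$, and $p(N - p) - (p - 1)Np = - p(Np - 2N + p)$ — but these are elementary. In short, Lemma \ref{Lemma 202} does all the work and the present lemma is just a reorganization of its output.
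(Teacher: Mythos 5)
Your proof is correct and is exactly the computation the paper has in mind — the paper states Lemma \ref{Lemma 301} with only the remark that Lemma \ref{Lemma 202} gives the estimates, leaving the exponent arithmetic implicit, and your proposal carries out that arithmetic accurately (including the identities $N(N-p)/(N-1)-Np=-Nq$ when $q=N(p-1)/(N-1)$ and $p(N-p)-(p-1)Np=-p(Np-2N+p)$). No gaps.
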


We are now ready to prove Theorem \ref{Theorem 101}.

\begin{proof}[Proof of Theorem \ref{Theorem 101}]
As we have noted in the introduction, it suffices to show that the mountain pass level $c$ defined in \eqref{108} is below the threshold level $c^\ast$ in \eqref{104}. For any $u \in W^{1,\,p}_0(\Omega) \setminus \set{0}$, $E(tu) \to - \infty$ as $t \to + \infty$ and hence $\exists\, t_u > 0$ such that $E(t_u u) < 0$. Then the line segment $\set{tu : 0 \le t \le t_u}$ belongs to $\Gamma$ and hence
\begin{equation} \label{301}
c \le \max_{0 \le t \le t_u}\, E(tu) \le \max_{t \ge 0}\, E(tu).
\end{equation}
In each of the two cases in the theorem, we will construct sequences $\seq{\eps_j}, \seq{\delta_j}$ such that $\eps_j \to 0,\, 0 < \delta_j \le 1,\, \eps_j/\delta_j \to 0$, and \eqref{217} with $\nu = 1$ holds, and conclude from Proposition \ref{Proposition 203} and \eqref{301} that $c < c^\ast$.

({\em i}) Let $q < N(p - 1)/(N - 1)$ and $s > N^2 (p - 1)/(N - 1)(N - p)$. We take a sequence $\eps_j \to 0$ and set $\delta_j = \eps_j^\kappa$, where $\kappa \in [0,1)$ is to be determined. Since
\[
s > \frac{N^2 (p - 1)}{(N - 1)(N - p)} > \frac{N(p - 1)}{N - p},
\]
Lemma \ref{Lemma 301} gives
\begin{align*}
\frac{\dint_{\R^N} |\nabla v_{\eps_j,\delta_j}|^q\, dx}{\dint_{\R^N} v_{\eps_j,\delta_j}^s\, dx} & = \Theta\big(\eps_j^{[(N-p)(s+q/(p-1))-Np]/p+\kappa[N(p-1)-(N-1)q]/(p-1)}\big)\\[5pt]
& = \Theta\big(\eps_j^{[N(p-1)-(N-1)q](\kappa - \underline{\kappa})/(p-1)}\big),
\end{align*}
where
\[
\underline{\kappa} = \frac{Np(p-1)-(N-p)(p-1)s-(N-p)q}{[N(p-1)-(N-1)q]p},
\]
and
\begin{align*}
\frac{(\eps_j/\delta_j)^{(N-p)/(p-1)}}{\dint_{\R^N} v_{\eps_j,\delta_j}^s\, dx} & = \Theta\big(\eps_j^{[(N-p)(p-1)s-(Np-2N+p)p]/p(p-1)-\kappa(N-p)/(p-1)}\big)\\[5pt]
& = \Theta\big(\eps_j^{(N-p)(\overline{\kappa} - \kappa)/(p-1)}\big),
\end{align*}
where
\[
\overline{\kappa} = \frac{(N-p)(p-1)s-(Np-2N+p)p}{(N-p)p}.
\]
We want to choose $\kappa \in [0,1)$ so that $\kappa > \underline{\kappa}$ and $\kappa < \overline{\kappa}$. This is possible if and only if $\underline{\kappa} < \overline{\kappa}$, $\underline{\kappa} < 1$, and $\overline{\kappa} > 0$. Tedious calculations show that these inequalities are equivalent to
\[
s > \frac{N^2 (p - 1)}{(N - 1)(N - p)},
\]
\[
s > \frac{Nq}{N - p},
\]
and
\[
s > \frac{N^2 (p - 1)}{(N - 1)(N - p)} - \frac{N - p}{(N - 1)(p - 1)},
\]
respectively, all of which hold under our assumptions on $q$ and $s$.

({\em ii}) Let $q \ge N(p - 1)/(N - 1)$ and $s > Nq/(N - p)$. We take a sequence $\eps_j \to 0$ and set $\delta_j = 1$. Since
\[
s > \frac{Nq}{N - p} \ge \frac{N^2 (p - 1)}{(N - 1)(N - p)} > \frac{N(p - 1)}{N - p},
\]
Lemma \ref{Lemma 301} gives
\[
\frac{\dint_{\R^N} |\nabla v_{\eps_j,\delta_j}|^q\, dx}{\dint_{\R^N} v_{\eps_j,\delta_j}^s\, dx} = \begin{cases}
\Theta\big(\eps_j^{[(N-p)s-Nq]/p}\big), & q > \frac{N(p - 1)}{N - 1}\\[7.5pt]
\Theta\big(\eps_j^{[(N-p)s-Nq]/p}\, |\log \eps_j|\big), & q = \frac{N(p - 1)}{N - 1}
\end{cases}
\]
and
\[
\frac{(\eps_j/\delta_j)^{(N-p)/(p-1)}}{\dint_{\R^N} v_{\eps_j,\delta_j}^s\, dx} = \Theta\big(\eps_j^{[(N-p)(p-1)s-(Np-2N+p)p]/p(p-1)}\big).
\]
Since $s > Nq/(N - p)$, the first limit in \eqref{217} holds. The second limit also holds since
\[
\frac{Nq}{N - p} \ge \frac{N^2 (p - 1)}{(N - 1)(N - p)} > \frac{(Np - 2N + p) p}{(N - p)(p - 1)}. \QED
\]
\end{proof}

\subsection{Proofs of Theorems \ref{Theorem 106} and \ref{Theorem 108}}

First we prove Theorem \ref{Theorem 108}.

\begin{proof}[Proof of Theorem \ref{Theorem 108}]
Integrating the easily verified identity
\begin{multline*}
\left[\divg \left(|\nabla u|^{p-2}\, \nabla u + |\nabla u|^{q-2}\, \nabla u\right) + g(u)\right](x \cdot \nabla u) = \left(\frac{N}{p} - 1\right) |\nabla u|^p + \left(\frac{N}{q} - 1\right) |\nabla u|^q\\[7.5pt]
- NG(u) + \divg \left[\left(|\nabla u|^{p-2}\, \nabla u + |\nabla u|^{q-2}\, \nabla u\right)(x \cdot \nabla u) - x \left(\frac{|\nabla u|^p}{p} + \frac{|\nabla u|^q}{q}\right) + x\, G(u)\right]
\end{multline*}
over $\Omega$ gives
\begin{multline*}
\left(\frac{N}{p} - 1\right) \int_\Omega |\nabla u|^p\, dx + \left(\frac{N}{q} - 1\right) \int_\Omega |\nabla u|^q\, dx - N \int_\Omega G(u)\, dx\\[7.5pt]
+ \int_{\bdry{\Omega}} \left[\left(|\nabla u|^{p-2}\, \nabla u + |\nabla u|^{q-2}\, \nabla u\right)(x \cdot \nabla u) - x \left(\frac{|\nabla u|^p}{p} + \frac{|\nabla u|^q}{q}\right)\right] \cdot \nu\, d\sigma = 0
\end{multline*}
since $u$ is a weak solution of problem \eqref{113}. We have $(\nabla u \cdot \nu)(x \cdot \nabla u) = |\nabla u|^2\, (x \cdot \nu)$ and $|\nabla u| = \abs{\dfrac{\partial u}{\partial \nu}}$ since $u = 0$ on $\bdry{\Omega}$, so the last equation reduces to
\begin{multline} \label{302}
\left(\frac{N}{p} - 1\right) \int_\Omega |\nabla u|^p\, dx + \left(\frac{N}{q} - 1\right) \int_\Omega |\nabla u|^q\, dx - N \int_\Omega G(u)\, dx\\[7.5pt]
+ \int_{\bdry{\Omega}} \left[\left(1 - \frac{1}{p}\right) \abs{\frac{\partial u}{\partial \nu}}^p + \left(1 - \frac{1}{q}\right) \abs{\frac{\partial u}{\partial \nu}}^q\right](x \cdot \nu)\, d\sigma = 0.
\end{multline}
On the other hand, testing problem \eqref{113} with $u$ gives
\begin{equation} \label{303}
\int_\Omega |\nabla u|^p\, dx + \int_\Omega |\nabla u|^q\, dx - \int_\Omega u\, g(u)\, dx = 0.
\end{equation}
Multiplying \eqref{303} by $N/p - 1$ and subtracting from \eqref{302} gives \eqref{114}.
\end{proof}

Now we prove Theorem \ref{Theorem 106}.

\begin{proof}[Proof of Theorem \ref{Theorem 106}]
Suppose problem \eqref{111} has a nontrivial weak solution $u \in W^{1,\,p}_0(\Omega) \cap W^{2,\,p}(\Omega)$. Taking $g(t) = \mu\, |t|^{q-2}\, t + |t|^{p^\ast - 2}\, t$ in \eqref{114} and combining with \eqref{112} and \eqref{106} gives
\begin{multline} \label{304}
\frac{1}{N} \int_{\bdry{\Omega}} \left[\left(1 - \frac{1}{p}\right) \abs{\frac{\partial u}{\partial \nu}}^p + \left(1 - \frac{1}{q}\right) \abs{\frac{\partial u}{\partial \nu}}^q\right](x \cdot \nu)\, d\sigma = \left(\frac{1}{q} - \frac{1}{p^\ast}\right) \mu \int_\Omega |u|^q\, dx\\[7.5pt]
- \left(\frac{1}{q} - \frac{1}{p}\right) \int_\Omega |\nabla u|^q\, dx \le \left(\frac{1}{q} - \frac{1}{p}\right) \left(\mu_1 \int_\Omega |u|^q\, dx - \int_\Omega |\nabla u|^q\, dx\right) \le 0.
\end{multline}
Without loss of generality, we may assume that $\Omega$ is star-shaped with respect to the origin. Then $x \cdot \nu > 0$ on $\bdry{\Omega}$, so \eqref{304} implies that $u$ is an eigenfunction of the $q$-Laplacian associated with the eigenvalue $\mu_1$ and $\partial u/\partial \nu = 0$ on $\bdry{\Omega}$, contradicting the Hopf lemma (see V{\'a}zquez \cite[Theorem 5]{MR768629}).
\end{proof}

\subsection{Proof of Theorem \ref{Theorem 109}}

We have
\[
E_\nu(u) = E_0(u) + \frac{\nu}{q} \int_\Omega |\nabla u|^q\, dx, \quad u \in W^{1,\,p}_0(\Omega).
\]
Taking $\nu = 0$ and $\delta_j = 1$ in Proposition \ref{Proposition 203} and noting that $v_{\eps,1} = v_\eps$ gives the following proposition.

\begin{proposition} \label{Proposition 302}
If
\begin{equation} \label{305}
\frac{\eps^{(N-p)/(p-1)}}{\dint_{\R^N} v_\eps^s\, dx} \to 0 \text{ as } \eps \to 0,
\end{equation}
then
\[
\max_{t \ge 0}\, E_0(t v_\eps(x - x_0)) < c^\ast
\]
for all sufficiently small $\eps > 0$.
\end{proposition}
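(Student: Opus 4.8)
The plan is to obtain Proposition \ref{Proposition 302} as the special case $\nu = 0$, $\delta_j \equiv 1$ of Proposition \ref{Proposition 203}. First I would record the elementary splitting
\[
E_\nu(u) = E_0(u) + \frac{\nu}{q} \int_\Omega |\nabla u|^q\, dx,
\]
which identifies $E_0$ with the functional $E_\nu$ at $\nu = 0$, so that Proposition \ref{Proposition 203} applies verbatim with $\nu = 0$. I would also note that $v_{\eps,1} = v_\eps$: indeed $\psi(x/1) = \psi(x)$ gives $u_{\eps,1} = u_\eps$, and hence $v_{\eps,1} = u_{\eps,1}/\pnorm[p^\ast]{u_{\eps,1}} = v_\eps$.

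Since Proposition \ref{Proposition 203} is phrased in terms of sequences whereas Proposition \ref{Proposition 302} asserts the conclusion for all sufficiently small $\eps$, I would argue by contradiction. If the conclusion failed, there would exist a sequence $\eps_j \to 0$ with $\max_{t \ge 0} E_0(t v_{\eps_j}(x - x_0)) \ge c^\ast$. Put $\delta_j = 1$ for every $j$; then $0 < \delta_j \le 1$ and $\eps_j/\delta_j = \eps_j \to 0$, so the structural hypotheses on the sequences in Proposition \ref{Proposition 203} hold, and it remains to check the two limits in \eqref{217} with $\nu = 0$. The numerator of the first quotient is $\nu \dint_{\R^N} |\nabla v_{\eps_j,1}|^q\, dx = 0$ identically, while its denominator $\dint_{\R^N} v_{\eps_j,1}^s\, dx = \dint_{\R^N} v_{\eps_j}^s\, dx$ is positive for each $j$; hence that quotient is the zero sequence. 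The second quotient equals $\eps_j^{(N-p)/(p-1)} / \dint_{\R^N} v_{\eps_j}^s\, dx$, which tends to $0$ precisely by hypothesis \eqref{305}. Therefore \eqref{217} is satisfied, and Proposition \ref{Proposition 203} yields $\max_{t \ge 0} E_0(t v_{\eps_j,1}(x - x_0)) < c^\ast$ for all large $j$; since $v_{\eps_j,1} = v_{\eps_j}$, this contradicts the choice of $\seq{\eps_j}$.

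As this is a direct specialization of an already-established proposition, there is essentially no obstacle. The only points deserving a word of care are the passage from the sequential formulation to the ``for all sufficiently small $\eps$'' formulation, handled by the contradiction argument above, and the identity $v_{\eps,1} = v_\eps$, which is what lets the estimates of Lemma \ref{Lemma 202} used inside Proposition \ref{Proposition 203} specialize correctly.
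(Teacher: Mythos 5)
Your proposal is correct and is exactly the paper's argument: the paper obtains Proposition \ref{Proposition 302} by taking $\nu = 0$ and $\delta_j = 1$ in Proposition \ref{Proposition 203} and noting $v_{\eps,1} = v_\eps$. Your extra care in converting the sequential statement to the ``for all sufficiently small $\eps$'' form via contradiction is a detail the paper leaves implicit, but it is the same route.
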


Equation \eqref{209} gives the following estimate for the quotient in \eqref{305}.

\begin{lemma} \label{Lemma 303}
We have
\[
\frac{\eps^{(N-p)/(p-1)}}{\dint_{\R^N} v_\eps^s\, dx} = \begin{cases}
\Theta\big(\eps^{[(N-p)(p-1)s-(Np-2N+p)p]/p(p-1)}\big), & s > \frac{N(p - 1)}{N - p}\\[7.5pt]
\Theta\big(\eps^{(N-p^2)/p(p-1)}/|\log \eps|\big), & s = \frac{N(p - 1)}{N - p}\\[7.5pt]
\Theta\big(\eps^{(N-p)(p-s)/p(p-1)}\big), & s < \frac{N(p - 1)}{N - p}.
\end{cases}
\]
\end{lemma}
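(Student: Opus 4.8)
The plan is to read the result directly off the estimate \eqref{209} for $\dint_{\R^N} v_\eps^s\, dx$ and then simplify the resulting exponent of $\eps$ in each of the three regimes for $s$. Since the numerator $\eps^{(N-p)/(p-1)}$ is already explicit and $\Theta(\cdot)$ is multiplicative, the only ingredient needed is the asymptotic behavior of the denominator as $\eps \to 0$, so dividing is painless and the proof is essentially bookkeeping.

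First I would treat the case $s > N(p-1)/(N-p)$: \eqref{209} gives $\dint_{\R^N} v_\eps^s\, dx = \Theta\big(\eps^{[Np-(N-p)s]/p}\big)$, so the quotient is $\Theta(\eps^\alpha)$ with $\alpha = \frac{N-p}{p-1} - \frac{Np-(N-p)s}{p}$. Writing this over the common denominator $p(p-1)$ and using $p(N-p) - (p-1)Np = -p(Np-2N+p)$ to simplify the numerator gives $\alpha = \big[(N-p)(p-1)s - (Np-2N+p)p\big]/p(p-1)$, the claimed exponent. Next, for $s = N(p-1)/(N-p)$, \eqref{209} gives $\dint_{\R^N} v_\eps^s\, dx = \Theta\big(\eps^{N/p}\,|\log\eps|\big)$, and dividing yields $\Theta(\eps^\beta/|\log\eps|)$ with $\beta = \frac{N-p}{p-1} - \frac{N}{p} = \frac{p(N-p)-N(p-1)}{p(p-1)} = \frac{N-p^2}{p(p-1)}$. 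Finally, for $s < N(p-1)/(N-p)$, \eqref{209} gives $\dint_{\R^N} v_\eps^s\, dx = \Theta\big(\eps^{(N-p)s/p(p-1)}\big)$, so the quotient is $\Theta(\eps^\gamma)$ with $\gamma = \frac{N-p}{p-1} - \frac{(N-p)s}{p(p-1)} = \frac{(N-p)(p-s)}{p(p-1)}$.

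I do not expect any genuine obstacle here: the lemma is an immediate consequence of \eqref{209} together with elementary arithmetic. The only points requiring a little care are the exponent bookkeeping — in particular spotting the factorization $p(N-p)-(p-1)Np = -p(Np-2N+p)$ that collapses the first-case exponent into the stated form — and noting that the $\Theta$ constants are handled correctly, which is automatic since \eqref{209} already records two-sided bounds uniform in small $\eps$.
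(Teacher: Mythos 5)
Your proposal is correct and follows exactly the route the paper intends: the paper itself gives no written proof of Lemma~\ref{Lemma 303}, merely remarking that ``Equation~\eqref{209} gives the following estimate for the quotient in~\eqref{305},'' and your computation just carries out the implied exponent arithmetic. All three exponent simplifications check out, including the identity $p(N-p)-(p-1)Np=-p(Np-2N+p)$ used in the first case.
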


We are now ready to prove Theorem \ref{Theorem 109}.

\begin{proof}[Proof of Theorem \ref{Theorem 109}]
The proof is similar to that of Theorem \ref{Theorem 101}, so we will be sketchy. Let
\[
\Gamma_\nu = \set{\gamma \in C([0,1],W^{1,\,p}_0(\Omega)) : \gamma(0) = 0,\, E_\nu(\gamma(1)) < 0},
\]
set
\[
c_\nu := \inf_{\gamma \in \Gamma_\nu}\, \max_{u \in \gamma([0,1])}\, E_\nu(u),
\]
and note that $c_\nu > 0$ when $\nu > 0$. It suffices to show that $c_\nu < c^\ast$ for sufficiently small $\nu$. We will show that
\begin{equation} \label{306}
c_0 := \inf_{\gamma \in \Gamma_0}\, \max_{u \in \gamma([0,1])}\, E_0(u) < c^\ast.
\end{equation}
Then there is a path $\gamma_0 \in \Gamma_0$ such that
\[
\max_{u \in \gamma_0([0,1])}\, E_0(u) < c^\ast.
\]
For all sufficiently small $\nu > 0$,
\[
E_\nu(\gamma_0(1)) = E_0(\gamma_0(1)) + \frac{\nu}{q} \int_\Omega |\nabla \gamma_0(1)|^q\, dx < 0
\]
and
\[
\max_{u \in \gamma_0([0,1])}\, E_\nu(u) \le \max_{u \in \gamma_0([0,1])}\, E_0(u) + \frac{\nu}{q} \left(\max_{u \in \gamma_0([0,1])} \int_\Omega |\nabla u|^q\, dx\right) < c^\ast,
\]
so $\gamma_0 \in \Gamma_\nu$ and
\[
c_\nu \le \max_{u \in \gamma_0([0,1])}\, E_\nu(u) < c^\ast.
\]

To show that \eqref{306} holds, it suffices to show that
\begin{equation} \label{307}
\max_{t \ge 0}\, E_0(t u_0) < c^\ast
\end{equation}
for some $u_0 \in W^{1,\,p}_0(\Omega) \setminus \set{0}$ as in the proof of Theorem \ref{Theorem 101}. In each of the two cases in the theorem, we will show that \eqref{305} holds and conclude from Proposition \ref{Proposition 302} that \eqref{307} holds for $u_0 = v_\eps(x - x_0)$ with $\eps > 0$ sufficiently small.

({\em i}) Let $N \ge p^2$ and $q < s < p^\ast$. If $s > N(p - 1)/(N - p)$, then
\[
(N - p)(p - 1)\, s - (Np - 2N + p)\, p > N(p - 1)^2 - (Np - 2N + p)\, p = N - p^2,
\]
and if $s < N(p - 1)/(N - p)$, then
\[
(N - p)(p - s) > (N - p)\, p - N(p - 1) = N - p^2.
\]
So \eqref{305} follows from Lemma \ref{Lemma 303}.

({\em ii}) Let $N < p^2$. Then
\[
p < \frac{N(p - 1)}{N - p} < \frac{(Np - 2N + p)\, p}{(N - p)(p - 1)}.
\]
So if $q < s < p$, then $s < N(p - 1)/(N - p)$, and if $(Np - 2N + p)\, p/(N - p)(p - 1) < s < p^\ast$, then $s > N(p - 1)/(N - p)$. In either case, \eqref{305} follows from Lemma \ref{Lemma 303}.
\end{proof}

\subsection*{Acknowledgements}
The third author was supported by the  2022-0461 Research
Fund of the University of Ulsan.

\def\cdprime{$''$}

\end{document}